\documentclass{amsart}
\usepackage{wrapfig}
\usepackage[dvips]{graphicx}
\usepackage{amsmath,amsthm,amssymb,amscd}
\usepackage{braket}
\theoremstyle{definition}
\newtheorem{thm}{Theorem}[section]

\newtheorem{pro}[thm]{Proposition}
\newtheorem{cor}[thm]{Corollary}
\newtheorem{lem}[thm]{Lemma}
\newtheorem{ex}[thm]{Example}

\theoremstyle{definition}
\def\c#1{\mathcal {#1}}
\def\m#1{\mathbb {#1}}
\def\r#1{\rm {#1}}
\pagestyle{plain}

\begin{document}

\title{Fundamental group of $AF$-algebras with finite dimensional trace space}
\author{Takashi Kawahara}
\email{t-kawahara@math.kyushu-u.ac.jp}      
\maketitle
\begin{abstract}
We consider the realization of fundamental groups of $AF$-algebras in a certain class.  
We find the fundametal groups of $AF$-algebras with finite dimensional trace space which is not realizable as a fundamental group of von Neumann algebras.  
\end{abstract}

\section{Introduction}
Let $\c{M}$ be a factor of type $\r{II}_1$ with a normalized trace 
$\tau$. Murray and von Neumann introduced 
the fundamental group $F(\c{M})$ of $\c{M}$ in \cite{MN}. 
They showed that if $\c{M}$ is  hyperfinite, then 
$F(\c{M}) = {\mathbb R_+^{\times}}$. Since then 
there has been many works on the computation of the 
fundamental groups. Voiculescu showed that 
$F(L(\mathbb{F}_{\infty}))$ of the group factor $L(\mathbb{F}_{\infty})$ 
of the free group $\mathbb{F}_{\infty}$ contains the positive rationals in \cite{Vo} and 
Radulescu proved that 
$F(L(\mathbb{F}_{\infty})) = {\mathbb R}_+^{\times}$ in 
\cite{RF}.  Connes \cite{Co} showed that $F(L(G))$ is a countable group if $G$ is an ICC group with property (T). Popa and Vaes 
showed that either countable subgroup of $\mathbb R_+^{\times}$ or any uncountable group belonging to a certain "large" class 
can be realized as the fundamental group of some 
factor of type $\r{ II}_1$ in \cite{Po} and in \cite{PoVa}.  \\
\ \ \ Nawata and Watatani \cite{NY}, \cite{NY2} introduced the fundamental group of simple $C^*$-algebras 
with unique trace whose study is essentially based on the computation  of 
Picard groups by Kodaka \cite{kod1}, \cite{kod2}, \cite{kod3}.  
Nawata defined the fundamental group of non-unital $C^{*}$-algebras \cite{N1} and calculate the Picard group of some projectionless $C^{*}$-algebras with strict comparison by the fundamental groups \cite{N2}.  \\
\ \ \ We introduced the fundamental group of $C^*$-algebras with finite dimensional trace space in \cite{TK1} 
whose study is essentially based on the computation of \cite{NY}, \cite{NY2}.  
\ \ \ Moreover, We introduced the fundamental group of finite von Neumann algebras with finite dimensional normal trace space in \cite{TK2} and identified the realizable fundamental groups of them.  \\
\ \ \ In this paper, we will consider the difference between fundamental groups of $C^{*}$-algebras and those of finite von Neumann algebras.  
We will consider the realization of fundamental groups of $AF$-algebras.  
Let $a,b$ be non-zero positive real numbers.  
We suppose that $a\neq b$.  
Then we can notice that there is no finite von Neumann algebra $\c{M}$ with finite dimensional trace space 
such that $F(\c{M})=\set{\left[ 
 \begin{array}{cc}
a^{n} & 0  \\
 0 & b^{n} \\
 \end{array} 
 \right]:n\in \m{Z}}$ by the form of fundamental group of von Neumann algebras.  
If $a$ is algebraic number and $b$ is rational (or specific algebraic number), 
then there is no simple $AF$-algebra $\c{A}$ in a certain class 
such that $F(\c{A})=\set{\left[ 
 \begin{array}{cc}
a^{n} & 0  \\
 0 & b^{n} \\
 \end{array} 
 \right]:n\in \m{Z}}$.  
However, we show this as one of the main theorems at last, if $a$ is transcendal, 
then there is a simple $AF$-algebra $\c{A}$  
such that $F(\c{A})=\set{\left[ 
 \begin{array}{cc}
a^{n} & 0  \\
 0 & b^{n} \\
 \end{array} 
 \right]:n\in \m{Z}}$.  
 
 \section{Realization of fundamental group of AF-algebra}

We are interested in the existence of fundamental group.  
We will focus on the fundamental group of the specific $AF$-algebras.  
Let $\c{A}$ be an $AF$-algebra with $n$-dimensional trace space.  
Say $\set{\varphi_{i}}^{n}_{i=1}=\partial_{e} T(\c{A})_{1}$.  
Let $g$ be an element of $K_{0}(\c{A})$.  
Put $\Gamma(a)=(\varphi_{1}^{*}(g),\varphi^{*}_{2}(g),\dots,\varphi^{*}_{n}(g))$.  
From now, we will consider $AF$-algebras which satisfies that $\Gamma$ is an injective unit preserving ordered abellian group homomorphism from $(K_{0}(\c{A}),K_{0}(\c{A})^{+},[1_{\c{A}}])$ to $(\m{R}^{n},\m{R}^{n}_{+},(1,1,\cdots,1))$.  
We call that $\c{A}$ has $K_{0}(\c{A})$ $embedding$ $on$ $\m{R}^{n}$ if $\Gamma$ is an injective unit preserving ordered abellian group homomorphism, 
Then we can see that $(K_{0}(\c{A}),K_{0}(\c{A})^{+},[1_{\c{A}}])$ is an ordered abellian subgroup of $(\m{R}^{n},\m{R}^{n}_{+},(1,1,\cdots,1))$.   
Conversely, we will conduct the condition that an ordered abellian subgroup of $(\m{R}^{n},\m{R}^{n}_{+},(1,1,\cdots,1))$ is considered as $(K_{0}(\c{A}),K_{0}(\c{A})^{+},[1_{\c{A}}])$ for some $\c{A}$ the dimension of trace space of which is $n$.  

\begin{pro}\label{pro:embedding}
Let $\c{A}$ be an $AF$-algebra with $n$-dimensional trace space.  
We suppose that  $K_{0}(\c{A})$ is embedding on $\m{R}^{n}$.  
Let $D$ be an invertible positive diagonal matrix of $M_{n}(\m{C})$ and let $U$ be a permutation unitary of $M_{n}(\m{C})$.   
Then $\Gamma(K_{0}(\c{A}))DU=\Gamma(K_{0}(\c{A}))$ if and only if $DU\in F(\c{A})$.   
\end{pro}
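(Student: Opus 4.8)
The plan is to translate the identity $\Gamma(K_{0}(\c{A}))DU=\Gamma(K_{0}(\c{A}))$ into the language of $K$-theory and to pass between matrices $DU$ and ordered group automorphisms of $K_{0}(\c{A})$, exploiting that $\Gamma$ is injective by hypothesis and that $\c{A}$ is $AF$, so that Elliott's classification theorem is available. Throughout I regard an element $\Gamma(g)$ as a row vector, on which $DU$ acts by right multiplication, in accordance with the statement.

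For the ``only if'' direction, suppose $DU\in F(\c{A})$. By the definition of the fundamental group this transformation is realized by a Morita self-equivalence of $\c{A}$, equivalently by a projection $p\in\c{A}\otimes\c{K}$ together with a $*$-isomorphism $\theta\colon\c{A}\to p(\c{A}\otimes\c{K})p$. Such a $\theta$ induces an ordered group automorphism $\theta_{*}$ of $K_{0}(\c{A})$, and the matrix $DU$ is attached to $\theta$ precisely so that the extreme traces $\varphi_{i}$ transform by the permutation underlying $U$ while being rescaled by the diagonal of $D$. Tracking these identifications through the extended extreme traces $\tilde{\varphi}_{i}$ yields the intertwining relation $\Gamma(\theta_{*}(g))=\Gamma(g)\,DU$ for all $g\in K_{0}(\c{A})$. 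Since $\theta_{*}$ is surjective, applying $\Gamma$ to all of $K_{0}(\c{A})$ gives $\Gamma(K_{0}(\c{A}))\,DU=\Gamma(\theta_{*}(K_{0}(\c{A})))=\Gamma(K_{0}(\c{A}))$.

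For the ``if'' direction, assume $\Gamma(K_{0}(\c{A}))DU=\Gamma(K_{0}(\c{A}))$. Because $\Gamma$ is injective and its image is invariant under right multiplication by the invertible matrix $DU$, there is a unique bijection $\alpha\colon K_{0}(\c{A})\to K_{0}(\c{A})$ with $\Gamma(\alpha(g))=\Gamma(g)\,DU$; it is additive, and since $DU$ and $(DU)^{-1}$ have nonnegative entries and $\Gamma$ reflects the order, $\alpha$ is an ordered group automorphism (in general not unit preserving). As $\c{A}$ is $AF$, the positive element $\alpha([1_{\c{A}}])\in K_{0}(\c{A})^{+}$ is represented by a projection $p\in\c{A}\otimes\c{K}$, and $\alpha$ becomes a unit-preserving ordered isomorphism from $(K_{0}(\c{A}),K_{0}(\c{A})^{+},[1_{\c{A}}])$ onto $(K_{0}(p(\c{A}\otimes\c{K})p),K_{0}(p(\c{A}\otimes\c{K})p)^{+},[p])$. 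Elliott's classification theorem then lifts $\alpha$ to a $*$-isomorphism $\theta\colon\c{A}\to p(\c{A}\otimes\c{K})p$, which is exactly a Morita self-equivalence of $\c{A}$.

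It remains to verify that the transformation of the trace space induced by $\theta$ is precisely $DU$: the permutation part is read off from how $\theta$ matches the extreme traces $\varphi_{i}$, and the diagonal part from the values $\tilde{\varphi}_{i}(p)$ of the extended extreme traces on $p$, both being forced by the defining relation $\Gamma(\alpha(g))=\Gamma(g)\,DU$. This yields $DU\in F(\c{A})$. I expect this last identification, together with the realization of $\alpha([1_{\c{A}}])$ by a suitable projection in the previous paragraph, to be the main obstacle: one must ensure that the chosen $p$ and the lift $\theta$ produce exactly the prescribed scaling-and-permutation $DU$ and not some other transformation inducing the same automorphism of $K_{0}(\c{A})$, which is where the explicit form of $\Gamma$ and the hypothesis that $K_{0}(\c{A})$ is embedding on $\m{R}^{n}$ enter decisively.
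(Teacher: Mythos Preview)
Your proposal is correct and follows essentially the same approach as the paper: build the ordered-group automorphism $g\mapsto gDU$ (the paper calls it $M_{DU}$), realize the image of the unit by a projection $p$, lift via Elliott's classification to an isomorphism $\Phi\colon\c{A}\to pM_k(\c{A})p$, and read off that the associated trace-transformation $T_{(k,p,\Phi)}$ has matrix $DU$; for the converse the paper simply cites \cite{TK1}. Your self-flagged ``obstacle'' about the final identification is handled no more explicitly in the paper's proof than in your outline.
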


\begin{proof}
We consider that $K_{0}(\c{A})\subset \m{R}^{n}$ and that $\Gamma(K_{0}(\c{A}))=K_{0}(\c{A})$.  
We suppose that $K_{0}(\c{A})DU=K_{0}(\c{A})$.  
Let $p$ be a corresponding projection of $M_{k}(\c{A})$ to $(1,1,\cdots,1)DU$.  
We define an isomorphism of ordered abelian group 
from $(K_{0}(\c{A}),K_{0}(\c{A})^{+},(1,1,\cdots,1)$ onto $(K_{0}(\c{A}),K_{0}(\c{A})^{+},(1,1,\cdots,1)DU)$ 
$M_{DU}$ by \\ 
$M_{DU}g=g(DU)$.  Then an isomorphism $\Phi \c{A}\rightarrow pM_{k}(\c{A})p$ is induced   
by $M_{DU}$ and the matrix representation of $T_{(k,p,\Phi)}$ is $DU$.  
Therefore $DU\in F(\c{A})$.  
Converse is followed in \cite{TK1}.  
\end{proof}

\begin{pro}\label{pro:dense1}
Let $M$ be an additive subgroup of $\m{R}^{n}$.  
Let $\varepsilon>0$ arbitrary.  
We suppose that there exist $\set{a^{\varepsilon}_{i}}^{n}_{i=1}\subset M$ satisfying following properties;\\
(i) $\set{a^{\varepsilon}_{i}}^{n}_{i=1}$ are $\m{R}$-linearly independent. \\ 
(ii)$||a^{\varepsilon}_{i}||< \varepsilon$, 
where $||\cdot||$ is an Euclidean norm of $\m{R}^{n}$.\\
Then $M$ is dense in $\m{R}^{n}$.  
\end{pro}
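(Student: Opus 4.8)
The plan is to prove density directly: I will show that every point of $\m{R}^n$ can be approximated arbitrarily well by elements of $M$. Fix a target vector $x\in\m{R}^n$ and a tolerance $\delta>0$; the goal is to produce $m\in M$ with $\|x-m\|<\delta$.

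First I would choose the scale in the hypothesis small enough relative to $\delta$, namely pick $\varepsilon>0$ with $\tfrac{n}{2}\varepsilon<\delta$. For this $\varepsilon$ the hypothesis furnishes vectors $a^{\varepsilon}_1,\dots,a^{\varepsilon}_n\in M$ that are $\m{R}$-linearly independent and satisfy $\|a^{\varepsilon}_i\|<\varepsilon$ for each $i$. Since there are $n$ of them and they are linearly independent, they form a basis of $\m{R}^n$, so the target decomposes uniquely as $x=\sum_{i=1}^n c_i\,a^{\varepsilon}_i$ with real coefficients $c_i$.

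Next I would exploit the group structure of $M$ by rounding the coefficients to integers. Let $k_i\in\m{Z}$ be a nearest integer to $c_i$, so that $|c_i-k_i|\le\tfrac12$, and set $m=\sum_{i=1}^n k_i\,a^{\varepsilon}_i$. Because $M$ is an additive subgroup of $\m{R}^n$ and each $a^{\varepsilon}_i\in M$, the integer combination $m$ again lies in $M$. The triangle inequality then yields
\[
\|x-m\|=\Bigl\|\sum_{i=1}^n (c_i-k_i)\,a^{\varepsilon}_i\Bigr\|\le\sum_{i=1}^n |c_i-k_i|\,\|a^{\varepsilon}_i\|<\frac{n}{2}\,\varepsilon<\delta .
\]
As $x\in\m{R}^n$ and $\delta>0$ were arbitrary, this establishes that $M$ is dense in $\m{R}^n$.

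The argument is elementary and I do not anticipate a serious obstacle; the only point requiring care is that the coefficients $c_i$ in the expansion of $x$ depend on both $x$ and on the particular system $\{a^{\varepsilon}_i\}$, yet the final bound does not, because the rounding error in each coordinate is uniformly at most $\tfrac12$ while the vectors $a^{\varepsilon}_i$ are uniformly short. It is exactly the interplay of the two hypotheses that makes both indispensable: $\m{R}$-linear independence guarantees a genuine basis, hence an honest expansion of the arbitrary target, while the smallness in norm controls the accumulated rounding error.
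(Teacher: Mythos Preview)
Your proof is correct and follows essentially the same approach as the paper: expand the target in the small basis furnished by the hypothesis, round the real coefficients to integers, and bound the error by $n$ times the norm bound on the basis vectors. The only cosmetic difference is that you round to the nearest integer (error $\le\tfrac12$) whereas the paper uses the floor (error $\le 1$), yielding a constant $\tfrac{n}{2}$ in place of $n$.
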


\begin{proof}
Let $M$ be an additive subgroup of $\m{R}^{n}$ satisfying the hypothesis 
and let $\varepsilon>0$.   
Then we can choose $\set{a^{\frac{\varepsilon}{n}}_{i}}^{n}_{i=1}$.
By (i), $\set{a^{\frac{\varepsilon}{n}}_{i}}^{n}_{i=1}$ is a basis of $\m{R}^{n}$.  
Let $a$ be in $\m{R}^{n}$.  
Then $a=\sum^{n}_{i=1}r_{i}a^{\frac{\varepsilon}{n}}_{i}$ for some $r_{i}$ in $\m{R}$.  
Let $m_{i}$ be an integer satisfying $m_{i} \leq r_{i} \leq m_{i}+1$.  
Then $||a-\sum^{n}_{i=1}m_{i}a^{\frac{\varepsilon}{n}}_{i}||\leq \sum^{n}_{i=1}(r_{i}-m_{i})||a^{\frac{\varepsilon}{n}}_{i}||< \varepsilon$ by (ii).  
Therefore $M$ is dense in $\m{R}$.  
\end{proof}

Let $M$ be an additive subgroup of $\m{R}^{n}$.  
We define the positive cone of $M$ by $\set{(h_{i})^{n}_{i=1}\in M:h_{i}>0}\cup\set{(0,0,\cdots,0)}$.  
We denote it by $M^{+}$.  
Then $(M,M^{+})$ is an ordered abellian group.  
Moreover, we define the additive group homomorphism $\varphi_{k}$ from $M$ into $\m{R}$ 
by $\varphi_{k}((h_{i})^{n}_{i=1}))=h_{k}$.  
Then $\varphi_{k}$ are positive homomorphisms.  
Let $\set{g_{n}}^{\infty}_{n=1}$ be a sequence of $M$.  
We call $\set{g_{n}}^{\infty}_{n=1}$ $increasing$ ($decreasing$) 
if $g_{n}\leq g_{m}$ ($g_{n}\geq  g_{m}$) for any $n<m$ .  
We denote by $S(M,M^{+},(1,1,\cdots,1))$ the state space of $(M,M^{+},(1,1,\cdots,1))$ 
and by $\partial_{e}S(M,M^{+},(1,1,\cdots,1))$ the set of extremal points of $S(M,M^{+},(1,1,\cdots,1))$.  

\begin{pro}\label{pro:dense2}
Let $(M,M^{+})$ be an ordered abellian subgroup of $(\m{R}^{n},(\m{R}^{n})^{+})$.  
We suppose that $M$ is dense in $\m{R}^{n}$ and $(1,1,\cdots,1)\in M$.  \\
Then $(M,M^{+},(1,1,\cdots,1))$ is a dimension group and $\partial_{e}S(M,M^{+},(1,1,\cdots,1))=\set{\varphi_{k}}^{n}_{k=1}$.  
\end{pro}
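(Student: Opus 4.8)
The plan is to verify directly that $(M,M^{+},(1,1,\dots,1))$ satisfies the defining properties of a dimension group, namely that it is directed, unperforated and has the Riesz interpolation property, and then to describe the state space by transporting states on $M$ to positive functionals on $\m{R}^{n}$ via density. First I would record that $M^{+}$ really defines a partial order: if $g$ and $-g$ both lie in $M^{+}$ with $g\neq 0$, then every coordinate of $g$ and every coordinate of $-g$ is positive, which is impossible, so $M^{+}\cap(-M^{+})=\set{(0,\dots,0)}$. The same coordinatewise reasoning yields unperforation, since if $ng\in M^{+}$ for a positive integer $n$ and $g\neq 0$, then every coordinate of $ng$ is positive, hence every coordinate of $g$ is positive, so $g\in M^{+}$.

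Both directedness and interpolation would then be deduced from the hypothesis that $M$ is dense in $\m{R}^{n}$, using that the open orthant $\set{x\in\m{R}^{n}:x_{k}>0 \text{ for all } k}$ is open: given finitely many elements I can always find an element of $M$ whose coordinates strictly dominate any prescribed bounds, because the relevant open box is non-empty, and its differences with the given elements then have strictly positive coordinates and so lie in $M^{+}$. For directedness, given $g\in M$ I pick $h\in M$ with $h_{k}>\max(g_{k},0)$ for all $k$, so that $h,\,h-g\in M^{+}$ and $g=h-(h-g)\in M^{+}-M^{+}$. For Riesz interpolation with $a_{1},a_{2}\le b_{1},b_{2}$ I would split into two cases. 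If every $b_{j}-a_{i}$ is non-zero, then $\max_{i}a_{i,k}<\min_{j}b_{j,k}$ for each coordinate $k$, so the open box $\prod_{k}(\max_{i}a_{i,k},\min_{j}b_{j,k})$ is non-empty and by density contains some $c\in M$, which interpolates. If instead some $b_{j}=a_{i}$, then antisymmetry forces any interpolant to equal this common value, and $c=a_{i}=b_{j}$ works because $a_{i'}\le b_{j}$ and $a_{i}\le b_{j'}$ are part of the hypothesis. I expect this case distinction to be the main obstacle, since the strict positive cone makes the order sensitive to these degenerate equalities.

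For the state space, the key observation is that every state $s$ on $(M,M^{+},(1,1,\dots,1))$ is Lipschitz for the supremum norm: writing $u=(1,\dots,1)$, if $\|g\|_{\infty}<\lambda$ then $\lambda u\pm g$ have strictly positive coordinates, hence lie in $M^{+}$, so $-\lambda\le s(g)\le\lambda$ and thus $|s(g)|\le\|g\|_{\infty}$. Since $M$ is dense, $s$ extends uniquely to a continuous linear functional $\tilde{s}$ on $\m{R}^{n}$, and approximating points of the closed positive orthant by points of $M$ in the open orthant shows that $\tilde{s}$ is positive with $\tilde{s}(u)=1$. A linear functional on $\m{R}^{n}$ is positive for the strict cone exactly when all its coefficients are non-negative, so $\tilde{s}=\sum_{k}c_{k}\varphi_{k}$ with $c_{k}\ge 0$ and $\sum_{k}c_{k}=1$; restricting back gives $s=\sum_{k}c_{k}\varphi_{k}$ on $M$.

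Finally I would note that the resulting affine map from the standard $(n-1)$-simplex into $S(M,M^{+},(1,1,\dots,1))$ is a bijection: surjectivity is the extension argument above, and injectivity holds because a combination $\sum_{k}c_{k}\varphi_{k}$ that vanishes on the dense set $M$ vanishes on all of $\m{R}^{n}$, forcing $c_{k}=0$. An affine bijection carries extreme points to extreme points, and the extreme points of the simplex are its vertices, so $\partial_{e}S(M,M^{+},(1,1,\dots,1))=\set{\varphi_{k}}^{n}_{k=1}$. Together with the first two paragraphs, which show that $(M,M^{+},(1,1,\dots,1))$ is a directed unperforated group with Riesz interpolation and hence a dimension group, this establishes the proposition.
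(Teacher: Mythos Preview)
Your proof is correct and follows the same overall plan as the paper---verify the dimension-group axioms directly, then extend any state to a positive linear functional on $\m{R}^{n}$ by density---but your continuity argument is substantially cleaner. The paper spends three separate steps showing that a state respects monotone limits, is sequentially continuous, and takes Cauchy sequences to Cauchy sequences, before extending; your order-unit bound $|s(g)|\le\|g\|_{\infty}$ does this in one stroke. (One small repair: you need $\lambda u\in M$ for $\lambda u\pm g$ to lie in $M^{+}$, so take $\lambda$ an integer and scale---choose $m\in\m{N}$ with $m\|g\|_{\infty}<1$, whence $u\pm mg\in M^{+}$ gives $|s(g)|\le 1/m$, and the desired bound follows.) Conversely, the paper dismisses the Riesz property with a single sentence, while you carefully handle the degenerate case $a_{i}=b_{j}$ forced by the strict positive cone; your treatment here is more complete. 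The identification of extreme states via an affine bijection with the standard simplex is essentially the same in both arguments.
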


\begin{proof}
Since $M$ is dense in $\m{R}^{n}$, $M$ satisfies Riesz property.  \\
Then $(M,M^{+},(1,1,\cdots,1))$ is a dimension group.  \\
We will show that $\partial_{e}S(M,M^{+},(1,1,\cdots,1))=\set{\varphi_{k}}^{n}_{k=1}$.  \\
Let $\varphi$ be in $S(M,M^{+},(1,1,\cdots,1))$ 
and let $g$ be in $M$ arbitrary.  
First, we will show that $\varphi(g_{i})$ converges to $\varphi(g)$ 
for any increasing sequence $\set{g_{i}}^{\infty}_{i=1}$ 
which converges to $g$ and $g_{i}\neq g$ for any $i$.  
By positivity of $\varphi$, 
$\set{\varphi(g_{i})}^{n}_{i=1}$ is an increasing converging sequence.  
Put $\lim_{i\rightarrow \infty}\varphi(g_{i})=\alpha$.  
Conversely, we suppose that $\alpha \neq \varphi(g)$.  
Put $\varepsilon=|\alpha-\varphi(g)|>0$.  
Then $0<\cfrac{1}{n}<\cfrac{\varepsilon}{2}$ for some $n\in \m{N}$.  
Since $g_{i}\rightarrow g$, 
there exists a natural number $M_{1}$ 
such that $0\leq g-g_{i}\leq \cfrac{1}{n}(1,1,\cdots,1)$ for any $i\geq M_{1}$.   
Therefore $0\leq \varphi(g)-\varphi(g_{i})\leq \cfrac{1}{n}$.  
Otherwise, $\lim_{i\rightarrow \infty}\varphi(g_{i})=\alpha$, 
so there exists a natural number $M_{2}$ 
such that $0\leq \alpha-\varphi(g_{i})\leq \cfrac{1}{n}$ for any $i\geq M_{2}$.  
Therefore $|\alpha-\varphi(g)|\leq \cfrac{1}{n}$.  
Hence $0<\varepsilon=|\alpha-\varphi(g)|\leq \cfrac{1}{n}<\cfrac{\varepsilon}{2}$.  
This leads to contradiction.  
As is the same with this previous proof, 
we can show the case of decreasing sequences.  
Second, we will show that $\varphi$ is continuous on $M$ by Euclidean norm of $\m{R}$ 
for any $\varphi$ in $S(M,M^{+},(1,1,\cdots,1))$.  
Let $g$ be in $M$ and let $h_{i}$ be a converging sequence to $g$.  
Since $M$ is dense in $\m{R}^{n}$, we can get some $\set{g_{i}}^{\infty}_{i=1}$ 
which converges to $g$ increasingly and $g_{i}\neq g$ for any $i$.  
Then $\set{2g-g_{i}}^{\infty}_{i=1}$ is a deceasing sequence, converges to $g$, 
and $2g-g_{i}$ is not $g$ for any $i$.  
Let $\varepsilon>0$ arbitrary.  
Then there exists a natural number $M_{3}$ such that $0\leq \varphi(g)-\varphi(g_{i})<\varepsilon$ 
for any $i\geq M_{3}$.  
Since $h_{i}$ converges to $g$, there exists a natural number $M_{4}$ 
such that $g_{M_{3}}\leq h_{i}\leq 2g-g_{M_{3}}$ for any $i\geq M_{4}$.  
Then $\varphi(g_{M_{3}})\leq \varphi(h_{i})\leq \varphi(2g-g_{M_{3}})$.  
Therefore $|\varphi(h_{i})-\varphi(g)|<\varepsilon$ for any $i\geq M_{4}$.  
Since $\varphi(h_{i})$ converges to $\varphi(g)$, $\varphi$ is continuous on $M$.  
Third, we will show that $\set{\varphi(h_{i})}^{\infty}_{i=1}$ is a Cauchy sequence 
if $\set{h_{i}}^{\infty}_{i=1}$ is so.  
Let $\set{g_{i}}^{\infty}_{i=1}$ be a decreasing sequence in $M$ satisfying that $g_{i}\rightarrow (0,0,\cdots,0)$ 
and that $g_{i}\neq (0,0, \cdots,0)$ for any $i$.  
Then $\set{-g_{i}}^{\infty}_{i=1}$ is a increasing sequence, converges to $(0,0,\cdots,0)$, 
and $-g_{i}$ is not $g$ for any $i$.  
Let $\varepsilon>0$ arbitrary.  
Then there exists a natural number $M_{5}$ such that $0\leq \varphi(g_{i})<\varepsilon$ 
for any $i\geq M_{5}$.  
Since $\set{h_{i}}^{n}_{i=1}$, there exists a natural number $M_{6}$ 
such that $-g_{M_{5}}\leq h_{i}-h_{j}\leq g_{M_{5}}$ for any $i,j\geq M_{6}$.  
Then $-\varphi(g_{M_{5}})\leq \varphi(h_{i})-\varphi(h_{j})\leq \varphi(g_{M_{5}})$.  
Therefore $|\varphi(h_{i})-\varphi(h_{j})|<\varepsilon$ for any $i\geq M_{6}$.  
Hence  $\set{\varphi(h_{i})}^{\infty}_{i=1}$ is a Cauchy sequence 
if $\set{h_{i}}^{\infty}_{i=1}$ is so.  
By the second result, the third result and by the density of $M$, 
we can extend $\varphi$ to be a continuous functional $\hat \varphi$ on $\m{R}^{n}$.  
Since $M$ is $\m{Z}$-linear, then $\hat \varphi$ is $\m{R}$-linear.  
We denote by $\hat \varphi_{k}$ the $\m{R}$-linear continuous functional on $\m{R}^{n}$
such that $\hat \varphi_{k}((x_{i})^{n}_{i=1})=x_{k}$.  
Then $\set{\hat \varphi_{k}}^{n}_{k=1}$ is a basis of $\m{R}$-linear functionals on $\m{R}^{n}$.  
Therefore $\hat \varphi=\sum^{n}_{k=1}r_{k}\hat \varphi_{k}$ for some $r_{k}$ in $\m{R}^{n}$.  
Let $\set{e_{k}}^{n}_{k=1}$ be a canonical basis of $\m{R}^{n}$.  
If we consider the decreasing sequence of $M$ which converges to $e_{k}$, 
we can see that $r_{k}\geq 0$.  
Then $\varphi=\sum^{n}_{k=1}r_{k}\varphi_{k}$ for some $r_{k}\geq 0$.  
At last, we will show that $\set{\varphi_{k}}^{n}_{k=1}=\partial_{e}S(M,M^{+},(1,1,\cdots,1))$.  
Let $\varphi_{i}$ and $\varphi\in S(M,M^{+},(1,1,\cdots,1)$.  
We suppose that $0\leq \varphi\leq \varphi_{i}$.  
Then $0\leq \hat \varphi\leq \hat \varphi_{i}$.  
Since $\hat \varphi_{i}$ is in $\partial_{e}S(M,M^{+},(1,1,\cdots,1))$, 
then $\hat \varphi=r \hat \varphi_{i}$ for some $0\leq r\leq 1$.  
Therefore $\varphi=r\varphi_{i}$.  
Hence $\varphi_{i}\in \partial_{e}S(M,M^{+},(1,1,\cdots,1))$.  
Converse is followed by the result that $\set{\varphi_{i}}^{n}_{i=1}$ generates $S(M,M^{+},(1,1,\cdots,1))$.  
Hence $\set{\varphi_{k}}^{n}_{k=1}=\partial_{e}S(M,M^{+},(1,1,\cdots,1))$.  
\end{proof} 

By \ref{pro:dense1} and \ref{pro:dense2}, we can see this proposition.  
\begin{pro}\label{pro:dime}
Let $M$ be an additive subgroup of $\m{R}^{n}$ including $(1,1,\cdots,1)$.  
Put $M^{+}=\set{(g_{i})^{n}_{i=1}\in M:g_{i}>0}\cup\set{(0,0,\cdots,0)}$.  
We suppose that for any $\varepsilon>0$, there exist $\set{a^{\varepsilon}_{i}}^{n}_{i=1}\subset M$ satisfying following properties;\\
(i) $\set{a^{\varepsilon}_{i}}^{n}_{i=1}$ are $\m{R}$-linearly independent. \\ 
(ii)$||a^{\varepsilon}_{i}||< \varepsilon$, 
where $||\cdot||$ is an Euclidean norm of $\m{R}^{n}$.\\
Then $(M,M^{+},(1,1,\cdots,1))$ is a simple dimension group and \\ $\partial_{e}S(M,M^{+},(1,1,\cdots,1))=\set{\varphi_{k}}^{n}_{k=1}$.  
\end{pro}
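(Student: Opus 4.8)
The plan is to obtain the three assertions of Proposition \ref{pro:dime} by combining the two preceding propositions with a single additional order-theoretic argument, since everything except simplicity is already packaged in \ref{pro:dense1} and \ref{pro:dense2}.

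First I would note that hypotheses (i) and (ii) here are exactly the hypotheses of Proposition \ref{pro:dense1}, supplied for every $\varepsilon>0$; applying it gives that $M$ is dense in $\m{R}^{n}$. Since moreover $(1,1,\cdots,1)\in M$ by assumption, the ordered group $(M,M^{+})$ now satisfies the full hypothesis of Proposition \ref{pro:dense2}. Invoking that proposition verbatim yields at once that $(M,M^{+},(1,1,\cdots,1))$ is a dimension group and that $\partial_{e}S(M,M^{+},(1,1,\cdots,1))=\set{\varphi_{k}}^{n}_{k=1}$. Thus the only statement not already delivered is simplicity.

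The remaining point is therefore to upgrade ``dimension group'' to ``simple dimension group.'' I would use the standard characterization that a dimension group with order unit is simple if and only if every nonzero element of $M^{+}$ is an order unit. So let $g=(g_{i})^{n}_{i=1}\in M^{+}$ with $g\neq(0,0,\cdots,0)$; by the very definition of $M^{+}$ this forces $g_{i}>0$ for \emph{all} $i$. Given an arbitrary $h=(h_{i})^{n}_{i=1}\in M$, I would choose $N\in\m{N}$ so large that $Ng_{i}>|h_{i}|$ for every $i$. Then $Ng-h$ and $Ng+h$ both lie in $M$ (as $M$ is a group) and have all coordinates strictly positive, so both belong to $M^{+}$; that is, $-Ng\leq h\leq Ng$. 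Hence $g$ is an order unit, and simplicity follows.

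The main, and essentially only, obstacle is recognizing that simplicity is not part of the conclusion of \ref{pro:dense2} and must be argued separately. Once that is seen, the argument is a short finite-coordinate estimate that exploits precisely the feature of $M^{+}$ that a nonzero positive element has \emph{every} coordinate strictly positive; no further interpolation or unperforation considerations are needed beyond those already guaranteed by \ref{pro:dense2}.
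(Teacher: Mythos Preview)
Your argument is correct and follows exactly the route of the paper: invoke \ref{pro:dense1} for density, then \ref{pro:dense2} for the dimension-group structure and the identification of the extreme states, and finally read off simplicity from the form of $M^{+}$. The paper's own proof is the single sentence ``Simplicity is followed by $M^{+}$,'' so you have simply made explicit the order-unit check that the paper leaves to the reader.
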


\begin{proof}
Simplicity is followed by $M^{+}$.  
\end{proof}

We will think about the case $n=2$.  
Let $A$ be an invertible matrix of $M_{2}(\m{R})$.  
We call an additive subgroup $G$ of $\m{R}^{2}$ including $(1,1)$ $A$-$invariant$ if $AG=G$.  \\
Let $G$ be an additive subgroup of $\m{R}^{2}$ including $(1,1)$.  \\
Put $I(G)=\set{A=\left[ 
 \begin{array}{cc}
 a & 0 \\
 0 & b \\
 \end{array} 
 \right],\left[ 
 \begin{array}{cc}
 0 & a \\
 b & 0 \\
 \end{array} 
 \right]\in GL_{2}(\m{R}):AG=G, a>0, b>0}$.    

More general, next proposition follows.  
\begin{pro}\label{pro:rational}
Let $a$ be a positive algebraic number 
and let $b$ be positive rational numbers.  
We suppose that $a\neq b$, $a\neq 1$ and that $b\neq 1$.  
Put $b=\cfrac{p}{q}$, where $gcd(p,q)=1$ and where $p>0,q>0$.  
Put $A=\left[ 
 \begin{array}{cc}
 a & 0 \\
 0 & b \\
 \end{array} 
 \right]$.  
Let $G$ be an $A$-invariant additive subgroup of $\m{R}^{2}$ including $(1,1)$.  
Then $I(G) \supsetneq \set{A^{n}:n\in \m{Z}}$.  
\end{pro}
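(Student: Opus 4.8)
The plan is to exhibit a single diagonal matrix lying in $I(G)$ but not among the powers of $A$; the strict containment then follows, since each $A^{n}=\mathrm{diag}(a^{n},b^{n})$ is diagonal with positive entries and preserves the $A$-invariant group $G$, so that $\set{A^{n}:n\in\m{Z}}\subseteq I(G)$ to begin with. The mechanism I would use is that \emph{every} integer Laurent polynomial in $A$ preserves $G$: if $g\in\m{Z}[t,t^{-1}]$ and $g(A)=\sum_{n}c_{n}A^{n}$ with $c_{n}\in\m{Z}$, then for $x\in G$ each $A^{n}x\in A^{n}G=G$, so $g(A)x\in G$ and hence $g(A)G\subseteq G$. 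If moreover $g(A)$ is invertible with inverse again of this form, then $g(A)G=G$, and since $g(A)=\mathrm{diag}(g(a),g(b))$ this gives $g(A)\in I(G)$ as soon as $g(a),g(b)>0$. Thus it suffices to produce a positive unit of the ring $\m{Z}[A,A^{-1}]=\set{\mathrm{diag}(g(a),g(b)):g\in\m{Z}[t,t^{-1}]}$ different from all $A^{n}$.

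I would look for an element of the shape $\mathrm{diag}(1,b^{\ell})$ with $\ell\neq 0$. Such a matrix can never equal $\mathrm{diag}(a^{n},b^{n})$: the first entry forces $a^{n}=1$, hence $n=0$ (as $a>0$, $a\neq1$), and then $b^{n}=1\neq b^{\ell}$ because $b\neq1$. To construct it, I use that $a$ is algebraic. Let $m\in\m{Z}[t]$ be a primitive integer polynomial with $m(a)=0$. Since $b$ is rational with $b\neq a$, $b$ is not a root of $m$, so $\mu:=m(b)\in\m{Q}^{\times}$; writing $b=p/q$ with $\gcd(p,q)=1$ and using this, one checks $S:=\m{Z}[b,b^{-1}]=\m{Z}[1/(pq)]$, a localization of $\m{Z}$ and hence a principal ideal domain in which $\mu\in S$ and $(\mu)=(M)$ for some integer $M\geq1$ coprime to $pq$. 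For any $w\in\m{Z}[t,t^{-1}]$ the element $g=1+mw$ satisfies $g(a)=1$ and $g(b)=1+\mu\,w(b)$; as $w$ varies, $w(b)$ runs over all of $S$, so the values $g(b)$ attainable with $g(a)=1$ are exactly $1+\mu S=1+MS$.

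Now comes the decisive step. Because $\gcd(M,pq)=1$, the element $b=p/q$ is a unit in $S/MS\cong\m{Z}/M\m{Z}$, so it has finite multiplicative order $\ell\geq1$ there; consequently $b^{\ell}\equiv1$ and $b^{-\ell}\equiv1\pmod{M}$, that is $b^{\pm\ell}-1\in\mu S$. Choosing $w_{\pm}\in\m{Z}[t,t^{-1}]$ with $w_{\pm}(b)=(b^{\pm\ell}-1)/\mu\in S$ and putting $g_{\pm}=1+m\,w_{\pm}$, I get $g_{\pm}(a)=1$ and $g_{\pm}(b)=b^{\pm\ell}$, so $g_{\pm}(A)=\mathrm{diag}(1,b^{\pm\ell})$ are mutually inverse elements of $\m{Z}[A,A^{-1}]$. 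Hence $\mathrm{diag}(1,b^{\ell})$ is a positive unit of $\m{Z}[A,A^{-1}]$; by the first paragraph it lies in $I(G)$, and since $\ell\geq1$ it is not a power of $A$. Therefore $I(G)\supsetneq\set{A^{n}:n\in\m{Z}}$.

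I expect the only real obstacle to be this finite-order step: one must know that $\mu=m(b)$ is a nonzero element of $S$ whose ideal has finite index, and that $b$ becomes a finite-order unit modulo it. Both facts rest squarely on the hypotheses, namely algebraicity of $a$ (which yields a nonzero $m$ with $m(a)=0$, hence $\mu\neq0$ and $M<\infty$) and rationality of $b$ (which makes $S$ a localization of $\m{Z}$, so that $S/MS$ is the finite ring $\m{Z}/M\m{Z}$ in which $b$ has finite order). For transcendental $a$ no such $m$ exists and the construction breaks down, in agreement with the opposite realization result obtained later.
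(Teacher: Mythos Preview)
Your argument is correct and follows the same strategy as the paper: both use that $G$ is stable under $\m{Z}[A,A^{-1}]$, exploit the integer minimal polynomial $\psi$ of $a$ (so that $\psi(A)=\mathrm{diag}(0,\psi(b))$ with $\psi(b)\in\m{Q}^{\times}$), and then do elementary number theory with $b=p/q$ to manufacture a positive diagonal unit of the form $\mathrm{diag}(1,\ast)$ not among the powers of $A$. The only cosmetic difference is the target element---you obtain $\mathrm{diag}(1,b^{\pm\ell})$ via the multiplicative order of $b$ in the finite ring $S/MS\cong\m{Z}/M\m{Z}$, whereas the paper aims for $\mathrm{diag}(1,q^{\pm N})$ through explicit B\'ezout manipulations; your packaging is cleaner but the underlying mechanism is the same.
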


\begin{proof}
Since $b \neq 1$, $p\neq 1$ or $q\neq 1$.  
We suppose $q \neq 1$.  
Let $\psi(x)=\sum^{n}_{i=0}a_{i}x^{i}$ be a $\m{Z}$-coefficient minimal polynomial of $a$.  
We will show that $\left[ 
 \begin{array}{cc}
 1 & 0 \\
 0 & \cfrac{1}{q^{n}} \\
 \end{array} 
 \right]$ and $\left[ 
 \begin{array}{cc}
 1 & 0 \\
 0 & q^{n} \\
 \end{array} 
 \right]$ are in $I(G)$ for some $n$.  
Let $\alpha$ be an integer 
satisfying that there exist an integer $r$ relatively prime to $q$ 
and a natural number $m$ 
such that $\alpha\sum^{n}_{i=1}a_{i}p^{i}q^{n-i}=rq^{m}$.  
Then there exist a natural number $N>-m+n$ and an integer $s$ 
such that $sr+q^{N}=1$.  
Otherwise $p^{N+m-n}$ and $q^{N+m-n}$ are relatively prime.  
Then $xp^{N+m-n}+yq^{N+m-n}=s$.  
In the same way, $p^{m-n}$ and $q^{m-n}$ are relatively prime, 
so $x^{\prime}p^{m-n}+y^{\prime}q^{m-n}=-s$.
Let $(g,h)$ be an element of $G$.  
Then $(g,h)(\alpha(xA^{N+m-n}+y)\psi(A)+I)=(g,\cfrac{1}{q^{N}}h)$ and 
$(g,h)(\alpha(x^{\prime}A^{m-n}+y^{\prime}I)\psi(A)+I)=(g,\cfrac{1}{q^{N}}h)$.  
 Therefore $\left[ 
 \begin{array}{cc}
 1 & 0 \\
 0 & \cfrac{1}{q^{N}}\\
 \end{array} 
 \right]$ and $\left[ 
 \begin{array}{cc}
 1 & 0 \\
 0 & q^{N} \\
 \end{array} 
 \right]$ are in $I(G)$.
 Hence $I(G) \supsetneq \set{A^{n}:n\in \m{Z}}$. 
\end{proof}

\begin{cor}
Let $a$ be a positive algebraic number 
and let $b$ be positive rational numbers.  
We suppose that $a\neq b$, $a\neq 1$ and that $b\neq 1$.  
Put $b=\cfrac{p}{q}$, where $gcd(p,q)=1$ and where $p>0,q>0$.  
Put $A=\left[ 
 \begin{array}{cc}
 a & 0 \\
 0 & b \\
 \end{array} 
 \right]$.  
There is no $AF$-algebras $\c{A}$ with $2$-dimensional trace space 
such that $\c{A}$ is embedding on $\m{R}^{n}$ and that 
$F(\c{A})=\set{A^{n}:n\in \m{Z}}$.  
\end{cor}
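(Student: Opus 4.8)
The plan is to derive a contradiction by combining the identification of the fundamental group with the matrices preserving $\Gamma(K_{0}(\c{A}))$ from Proposition \ref{pro:embedding} with the strict inclusion supplied by Proposition \ref{pro:rational}. Suppose, for contradiction, that such an $AF$-algebra $\c{A}$ exists, so that its trace space is $2$-dimensional, $\c{A}$ is embedding on $\m{R}^{2}$, and $F(\c{A})=\set{A^{n}:n\in \m{Z}}$. Write $G=\Gamma(K_{0}(\c{A}))\subset \m{R}^{2}$. Since $\c{A}$ is embedding on $\m{R}^{2}$ and $\Gamma$ is a unit preserving injective ordered group homomorphism, $G$ is an additive subgroup of $\m{R}^{2}$ containing $\Gamma([1_{\c{A}}])=(1,1)$, and we may identify $(K_{0}(\c{A}),K_{0}(\c{A})^{+},[1_{\c{A}}])$ with $(G,G^{+},(1,1))$.

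The first step is to identify $F(\c{A})$ with $I(G)$. In dimension $n=2$ every matrix of the form $DU$, with $D$ an invertible positive diagonal matrix and $U$ a permutation unitary, is exactly a matrix of one of the two shapes appearing in the definition of $I(G)$ (a positive diagonal matrix, or a positive anti-diagonal matrix), and conversely each matrix listed in $I(G)$ is of this form. By Proposition \ref{pro:embedding}, such a $DU$ lies in $F(\c{A})$ if and only if $G\,DU=G$, i.e. if and only if $DU\in I(G)$. Hence $F(\c{A})=I(G)$.

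Next I would feed $G$ into Proposition \ref{pro:rational}. Since $A\in F(\c{A})=I(G)$ we have $AG=G$, so $G$ is an $A$-invariant additive subgroup of $\m{R}^{2}$ containing $(1,1)$. The standing hypotheses — $a$ a positive algebraic number, $b=\cfrac{p}{q}$ a positive rational, with $a\neq b$, $a\neq 1$ and $b\neq 1$ — are precisely those of Proposition \ref{pro:rational}. That proposition therefore yields the strict inclusion $I(G)\supsetneq \set{A^{n}:n\in \m{Z}}$. Combining the two steps gives the contradiction: on one hand $F(\c{A})=I(G)\supsetneq \set{A^{n}:n\in \m{Z}}$, while on the other hand we assumed $F(\c{A})=\set{A^{n}:n\in \m{Z}}$.

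The only real subtlety, and the point I would check most carefully, is the identification $F(\c{A})=I(G)$ in the first step. One must confirm that the right-multiplication convention $G\,DU=G$ used in Proposition \ref{pro:embedding} agrees with the left-multiplication convention $AG=G$ used in the definition of $I(G)$ (this is harmless here, since $2\times 2$ diagonal and permutation matrices behave symmetrically enough under transposition), and that every element of $F(\c{A})$ indeed arises from some positive-diagonal-times-permutation matrix, so that no element of the fundamental group is overlooked when passing to $I(G)$. Once this bookkeeping is settled, the contradiction is immediate and the corollary follows.
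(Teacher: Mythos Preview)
Your proof is correct and follows the same route as the paper's: assume such an $\c{A}$ exists, note that $G=\Gamma(K_{0}(\c{A}))$ is $A$-invariant, and combine Proposition~\ref{pro:embedding} with Proposition~\ref{pro:rational} to produce an element of $F(\c{A})$ outside $\set{A^{n}:n\in\m{Z}}$. Your closing worry is unnecessary: the contradiction only requires the inclusion $I(G)\subseteq F(\c{A})$, which is precisely what Proposition~\ref{pro:embedding} supplies, so you never need to know that every element of $F(\c{A})$ has the form $DU$.
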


\begin{proof}
Contraversely, we suppose that $F(\c{A})=\set{A^{n}:n\in \m{Z}}$ 
for some $AF$-algebra $\c{A}$ with $2$-dimensional trace space 
such that it satisfies (1).  
Then $K_{0}(\c{A})$ is $A$-invariant.  
By \ref{pro:rational}, $F(\c{A})\supset \set{A^{n}:n\in \m{Z}}$.  
This leads to contradiction.  
\end{proof}

More generally, next proposition follows.  
Let $R$ be an abelian ring.  
We denote by $\m{R}[s,t]$ the $\m{R}$-coefficient polynomial ring generated by $s,t$.  

\begin{pro}\label{pro:rational2}
Let $a,b$ be non-zero positive algebraic numbers.  
We suppose that $a\neq b$.  
Put $A=\left[ 
 \begin{array}{cc}
 a & 0 \\
 0 & b \\
 \end{array} 
 \right]$.  
If there exist $p,q\in \m{Z}$ such that $p\neq 0$, $q\neq 0,1$,and that $p$ and $q$ are
relatively prime and that $\cfrac{p}{q}\in \m{Z}[\beta,\beta^{-1}]$, 
then there is no $AF$-algebras $\c{A}$ with $2$-dimensional trace space 
such that $\c{A}$ is embedding on $\m{R}^{2}$ and that 
$F(\c{A})=\set{A^{n}:n\in \m{Z}}$.  
\end{pro}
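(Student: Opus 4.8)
The plan is to argue by contradiction and to reduce the statement, exactly as in the corollary to Proposition \ref{pro:rational}, to the purely arithmetic claim that the invariance group is too big. So suppose there were an $AF$-algebra $\c{A}$ with $2$-dimensional trace space, embedding on $\m{R}^{2}$, and with $F(\c{A})=\set{A^{n}:n\in\m{Z}}$. Put $G=\Gamma(K_{0}(\c{A}))\subset\m{R}^{2}$, so that $(1,1)\in G$. By Proposition \ref{pro:embedding} a positive diagonal-or-permutation matrix belongs to $F(\c{A})$ if and only if it preserves $G$; since each $A^{n}$ is diagonal this gives $AG=G$, so $G$ is $A$-invariant, and moreover $I(G)=\set{A^{n}:n\in\m{Z}}$. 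It therefore suffices to exhibit a single matrix in $I(G)$ that is not a power of $A$.

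The construction imitates the mechanism of Proposition \ref{pro:rational}. Since $G$ is $A$-invariant, every element of the integral Laurent ring $\m{Z}[A,A^{-1}]$ carries $G$ into $G$; these are the diagonal matrices $\mathrm{diag}(f(a),f(b))$ arising from a single $f\in\m{Z}[x,x^{-1}]$, so the problem is to decouple the two coordinates. Here I would use that $a$ is algebraic: if $\psi(x)=\sum_{i}a_{i}x^{i}$ is the integral minimal polynomial of $a$, then $\psi(A)=\mathrm{diag}(0,\psi(b))$ annihilates the first coordinate. Consequently, for every $g\in\m{Z}[x,x^{-1}]$ the matrix $g(A)\psi(A)+I$ lies in $\m{Z}[A,A^{-1}]$, preserves $G$, has first entry exactly $1$, and has second entry $1+g(b)\psi(b)$.

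The heart of the argument is then arithmetic. Because $b$ is algebraic, $R:=\m{Z}[b,b^{-1}]$ is an order in the number field $\m{Q}(b)$, so whenever $\psi(b)\neq 0$ the quotient $R/(\psi(b))$ is a finite ring. The hypothesis $\frac{p}{q}\in\m{Z}[\beta,\beta^{-1}]$ (with $\beta=b$) says $\frac{p}{q}\in R$, and combined with $\gcd(p,q)=1$ a Bézout relation $sp+tq=1$ shows that $q$ is a unit in $R/(\psi(b))$. Being a unit of a finite ring, $q$ has finite multiplicative order, so there is $N\in\m{N}$ with $q^{N}\equiv 1$ and $q^{-N}\equiv 1$ modulo $(\psi(b))$; that is, $q^{\pm N}-1=g_{\pm}(b)\psi(b)$ for suitable $g_{\pm}\in\m{Z}[x,x^{-1}]$ (replacing $N$ by $2N$ if necessary so that $q^{N}>0$). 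Feeding $g_{\pm}$ into the previous paragraph yields $\mathrm{diag}(1,q^{N})$ and $\mathrm{diag}(1,q^{-N})$ inside $\m{Z}[A,A^{-1}]$; both carry $G$ into $G$, and being mutually inverse they force $\mathrm{diag}(1,q^{N})G=G$, so $\mathrm{diag}(1,q^{N})\in I(G)$. Finally this matrix is not a power of $A$: if it were $A^{n}=\mathrm{diag}(a^{n},b^{n})$ then $a^{n}=1$, whence $n=0$ because $a\neq 1$, forcing $q^{N}=1$, which is impossible as $q\neq\pm 1$. This contradicts $I(G)=\set{A^{n}:n\in\m{Z}}$ and proves the proposition.

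The step I expect to be the main obstacle is precisely this arithmetic passage: pinning $q$ to be a unit of finite order in $R/(\psi(b))$ and running the coprimality bookkeeping that controls $N$ (the analogue of the relations $sr+q^{N}=1$ in Proposition \ref{pro:rational}). Two further points need care. First one must ensure $\psi(b)\neq 0$, i.e. that $b$ is not a Galois conjugate of $a$; if it is, $\psi(A)$ degenerates, and I would instead annihilate a coordinate with a polynomial vanishing at $a$ but not at $b$, or symmetrically exchange the two coordinates by a permutation unitary and read the hypothesis as $\frac{p}{q}\in\m{Z}[a,a^{-1}]$ using the minimal polynomial of $b$. Second, the pinned coordinate's base must be $\neq 1$ for the new matrix to be genuinely distinct from the powers of $A$; this is why the condition $a\neq 1$ carried along from Proposition \ref{pro:rational} is needed, and it is exactly what renders the final identity $q^{N}=1$ contradictory.
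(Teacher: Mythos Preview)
Your strategy matches the paper's: annihilate the first coordinate with $\psi_{1}(A)$ (the integral minimal polynomial of $a$), then manufacture $\mathrm{diag}(1,q^{N})$ and its inverse inside $\m{Z}[A,A^{-1}]$. The difference is only in packaging the arithmetic. The paper works explicitly: from a resultant relation $a(t)\psi_{1}(t)+b(t)\psi_{2}(t)=m$ it extracts $a(b)\psi_{1}(b)=m\in\m{Z}\setminus\{0\}$, then chains B\'ezout identities among $m,q,p$ together with the given $\phi\in\m{Z}[t,t^{-1}]$ satisfying $\phi(b)=p/q$ to write down $q^{n}\alpha(x\phi(A)^{N}+yI)\psi_{1}(A)+I=\mathrm{diag}(1,q^{n})$ by hand. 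You instead observe that $q$ is a unit in the finite ring $R/(\psi_{1}(b))$ and hence has finite multiplicative order, which delivers the same matrices more transparently. One correction: $R=\m{Z}[b,b^{-1}]$ is in general \emph{not} an order in $\m{Q}(b)$, since it need not be finitely generated as a $\m{Z}$-module when $b$ fails to be an algebraic integer; the right reason $R/(\psi_{1}(b))$ is finite is that $R$ is a one-dimensional domain finitely generated as a $\m{Z}$-algebra, so every nonzero quotient is Artinian with finite residue fields. Your flagged caveats are well placed and in fact sharper than the paper: the paper asserts that $\psi_{1},\psi_{2}$ are coprime merely from $a\neq b$ (which fails for Galois conjugates), and it never checks that the constructed $\mathrm{diag}(1,q^{n})$ lies outside $\{A^{n}:n\in\m{Z}\}$.
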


\begin{proof}
We denote by $\psi_{1}(t),\psi_{2}(t)$ the $\m{Z}$-coefficient minimal polynomials of $a,b$ respectively.  
Since $a\neq b$, 
$\psi_{1}(t)$ and $\psi_{2}(t)$ are relatively prime in the $\m{Q}$-coefficient polynomial ring $\m{Q}[t]$.  
Then there exist $\m{Z}$-coefficient polynomials $a(t)$, $b(t)$ and $m\in \m{Z}\setminus \set{0}$ such that $a(t)\psi_{1}(t)+b(t)\psi_{2}(t)=m$.   
Therefore $a(b)\psi_{1}(b)=m$.  
We can get $\alpha ,r\in \m{Z}$ satisfying that $\alpha m=rq^{N}$ for some $N\in \m{N}$ 
and that $r$ is relatively prime to $q$.  
Then there exist $s\in \m{Z}$ and $n\in \m{N}$ such that $sr+1=q^{n}$. 
By hypothesis, we can get a $\phi(t)\in P[t,t^{-1}]$ satisfying that $\phi(b)=\cfrac{p}{q}$.  
Since $gcd(p,q)=1$, $gcd(p^{n},q^{n})=1$ and $gcd(p^{n+N},q^{n+N})=1$.  
Then there exist $x,x^{\prime},y,y^{\prime}\in \m{Z}$ 
such that $xp^{n}+yq^{n}=s$ and that $x^{\prime}p^{n+N}+y^{\prime}q^{n+N}=-s$.  
Let $G$ be an $A$-invariant sub-additive group in $\m{R}^{2}$.  
Let $(g,h)$ be an element of $G$.  
Then $(g,h)(q^{n}\alpha(x\phi(A)^{N}+yI)\psi_{1}(A)+I)=(g,q^{n}h)$ and 
$(g,h)(\alpha(x^{\prime}\phi(A)^{n+N}+y^{\prime}I)\psi_{1}(A)+I)=(g,\cfrac{1}{q^{n}}h)$.  
Therefore $\left[ 
 \begin{array}{cc}
 1 & 0 \\
 0 & \cfrac{1}{q^{n}}\\
 \end{array} 
 \right]$ and $\left[ 
 \begin{array}{cc}
 1 & 0 \\
 0 & q^{n} \\
 \end{array} 
 \right]$ are in $I(G)$.
 Hence $I(G) \supsetneq \set{A^{n}:n\in \m{Z}}$. 
 By \ref{pro:embedding}, there is no $AF$-algebras $\c{A}$ with $2$-dimensional trace space 
such that $\c{A}$ is embedding on $\m{R}^{2}$ and that 
$F(\c{A})=\set{A^{n}:n\in \m{Z}}$.  
\end{proof}

\begin{ex}
Let $c_{1}, c_{2}\in \m{Z}$.  
We suppose that $c_{1}>1$ and that $gcd(c_{1},c_{2})=1$.  
If $b>0$ is a solution of the quadratic equation $c_{1}x^{2}+c_{2}x+c_{1}=0$.  
Then $b+\cfrac{1}{b}=-\cfrac{c_{2}}{c_{1}}$.  
Let $a$ be a non-zero positive algebraic number.  
Put $A=\left[ 
 \begin{array}{cc}
 a & 0 \\
 0 & b \\
 \end{array} 
 \right]$.  
By \ref{pro:rational2}, there is no $AF$-algebras $\c{A}$ with $2$-dimensional trace space 
such that $\c{A}$ is embedding on $\m{R}^{2}$ and that 
$F(\c{A})=\set{A^{n}:n\in \m{Z}}$.  
\end{ex}

\begin{lem}\label{lem:monic}
Let $\psi$ be a $\m{Z}$-coefficient monic polynomial 
and let $m$ be in $\m{Z}_{>1}$.  
Then there exist $\varphi_{1}, \varphi_{2}\in \m{Z}[t,t^{-1}]$ and $n(\neq0)\in \m{Z}$ 
such that $m\varphi_{1}(t)+\psi(t)\varphi_{2}(t)+t^{n}=1$.  
\end{lem}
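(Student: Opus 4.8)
The plan is to reformulate the desired identity as a statement about a finite quotient ring and then apply the pigeonhole principle. Observe that the identity $m\varphi_{1}(t)+\psi(t)\varphi_{2}(t)+t^{n}=1$ holds for some $\varphi_{1},\varphi_{2}\in \m{Z}[t,t^{-1}]$ if and only if $t^{n}-1$ lies in the ideal $(m,\psi(t))$ of $\m{Z}[t,t^{-1}]$; equivalently, $t^{n}\equiv 1$ in the quotient ring $R=\m{Z}[t,t^{-1}]/(m,\psi(t))$. So it suffices to produce a nonzero $n$ with $t^{n}\equiv 1$ modulo $(m,\psi(t))$, and then rearrange signs.

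The key point is a finiteness coming from the monic hypothesis. Write $d=\deg\psi$ and consider the ring $A=(\m{Z}/m\m{Z})[t]/(\psi(t))$. Since $\psi$ is monic of degree $d$, the classes of $1,t,\dots,t^{d-1}$ form a free basis of $A$ as a $\m{Z}/m\m{Z}$-module, so $A$ is finite with $m^{d}$ elements. (If $\deg\psi=0$ then $\psi=1$ and the statement is trivial, so I assume $d\geq 1$.) Note that $t$ need not be invertible in $A$; this is precisely why the Laurent ring, rather than $\m{Z}[t]$, appears in the conclusion.

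Next I would apply the pigeonhole principle to the powers of $t$. The images of $t^{0},t^{1},t^{2},\dots$ in the finite ring $A$ cannot all be distinct, so there are integers $0\leq b<a$ with $t^{a}\equiv t^{b}$ in $A$, that is, $t^{a}-t^{b}\in(m,\psi(t))$ as an ideal of $\m{Z}[t]$. Multiplying this membership by the unit $t^{-b}$ inside $\m{Z}[t,t^{-1}]$ yields $t^{a-b}-1\in(m,\psi(t))$ in $\m{Z}[t,t^{-1}]$, say $t^{a-b}-1=m\alpha+\psi\beta$ with $\alpha,\beta\in\m{Z}[t,t^{-1}]$. Setting $n=a-b\geq 1$ (so $n\neq 0$), $\varphi_{1}=-\alpha$, and $\varphi_{2}=-\beta$ then gives $m\varphi_{1}+\psi\varphi_{2}+t^{n}=1$, as required.

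I do not expect a serious obstacle here; the argument is essentially the observation that the multiplicative monoid generated by $t$ in the finite ring $A$ must eventually repeat a value. The only point requiring a little care is the passage from $\m{Z}[t]$ to $\m{Z}[t,t^{-1}]$: the relation $t^{a}\equiv t^{b}$ only collapses to $t^{a-b}\equiv 1$ after cancelling the possibly noninvertible factor $t^{b}$, which is exactly what inverting $t$ in the Laurent ring permits.
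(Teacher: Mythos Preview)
Your proof is correct and follows essentially the same route as the paper: both arguments use the pigeonhole principle on the powers of $t$ in the finite ring $\m{Z}[t]/(m,\psi)$ to obtain $t^{a}\equiv t^{b}$, and then cancel $t^{b}$ by passing to the Laurent polynomial ring. Your write-up is somewhat more careful in explaining why the quotient is finite and why the passage to $\m{Z}[t,t^{-1}]$ is needed, but the underlying idea is identical.
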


\begin{proof}
Since $\psi$ is monic, $\m{Z}[t]/(m,\psi)$ is finite set.  
By considering $\set{t^{n}}^{\infty}_{n=1}$ in $\m{Z}[t]/(m,\psi)$, 
there exists $n_{1},n_{2}\in \m{N}$ such that $n_{1}\neq n_{2}$ 
and that $t^{n_{1}}=t^{n_{2}}\in \m{Z}[t]/(m,\psi)$.  
Therefore there exist $\varphi^{\prime}_{1}, \varphi^{\prime}_{2}\in \m{Z}[t]$ 
such that $m\varphi^{\prime}_{1}(t)+\psi(t)\varphi^{\prime}_{2}(t)+t^{n_{1}}=t^{n_{2}}$.  
Put $\varphi_{1}(t)=t^{-n_{2}}\varphi^{\prime}_{1}(t)$, $\varphi_{2}(t)=t^{-n_{2}}\varphi^{\prime}_{2}(t)$, 
and $n=n_{1}-n_{2}$.  
Hence $m\varphi_{1}(t)+\psi(t)\varphi_{2}(t)+t^{n}=1$.  
\end{proof}

\begin{pro}\label{pro:rational3}
Let $a,b$ be non-zero positive algebraic numbers.  
We suppose that $a\neq b$, $a\neq 1$, $b\neq 1$, 
minimal $\m{Z}$-coefficient polynomials of $a$ and $b$ are different, 
and that the minimal $\m{Z}$-coefficient polynomial of $b$ is monic.  
Put $A=\left[ 
 \begin{array}{cc}
 a & 0 \\
 0 & b \\
 \end{array} 
 \right]$.  
There is no $AF$-algebras $\c{A}$ with $2$-dimensional trace space 
such that $\c{A}$ is embedding on $\m{R}^{n}$ and that 
$F(\c{A})=\set{A^{n}:n\in \m{Z}}$.  
\end{pro}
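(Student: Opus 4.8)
The plan is to reduce everything, exactly as in the proofs of \ref{pro:rational} and \ref{pro:rational2}, to the purely group-theoretic assertion that \emph{every} $A$-invariant additive subgroup $G\subseteq\m{R}^{2}$ containing $(1,1)$ satisfies $I(G)\supsetneq\set{A^{n}:n\in\m{Z}}$. Granting this, suppose some $AF$-algebra $\c{A}$ with $2$-dimensional trace space is embedding on $\m{R}^{2}$ and has $F(\c{A})=\set{A^{n}:n\in\m{Z}}$. Then $A\in F(\c{A})$ preserves $K_{0}(\c{A})$, so $G:=K_{0}(\c{A})$ is $A$-invariant and contains $(1,1)=[1_{\c{A}}]$; but \ref{pro:embedding} identifies $F(\c{A})$ with $I(G)$, whence $I(G)=\set{A^{n}:n\in\m{Z}}$, contradicting the strict inclusion. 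Thus the entire task is to exhibit one matrix in $I(G)$ that is not a power of $A$.

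First I would fix the target to be $\mathrm{diag}(1,b^{N})$ for a suitable $N\neq0$. This has the positive-diagonal form required of elements of $I(G)$, and since $a,b$ are positive reals with $a\neq1$, $b\neq1$, it cannot be a power of $A$: an equality with $A^{k}=\mathrm{diag}(a^{k},b^{k})$ would force $a^{k}=1$, hence $k=0$, hence $b^{N}=1$, impossible for $N\neq0$. Write $\psi_{1},\psi_{2}\in\m{Z}[t]$ for the minimal polynomials of $a,b$, with $\psi_{2}$ monic by hypothesis. Since $a\neq b$ have distinct minimal polynomials, $\psi_{1}$ and $\psi_{2}$ are coprime in $\m{Q}[t]$; clearing denominators (and doubling the relation if needed) produces $u,v\in\m{Z}[t]$ and an integer $m>1$ with $u\psi_{1}+v\psi_{2}=m$, so that evaluation at $b$ gives $u(b)\psi_{1}(b)=m$.

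Next I would invoke Lemma \ref{lem:monic} with the monic polynomial $\psi_{2}$ and this $m>1$, obtaining $\varphi_{1},\varphi_{2}\in\m{Z}[t,t^{-1}]$ and $N\neq0$ with $m\varphi_{1}(t)+\psi_{2}(t)\varphi_{2}(t)+t^{N}=1$; evaluating at $b$ yields $b^{N}=1-m\varphi_{1}(b)$, i.e. $b^{N}\equiv1\pmod{m}$ in $R:=\m{Z}[b,b^{-1}]$. Now put $C(t):=1-\psi_{1}(t)u(t)\varphi_{1}(t)\in\m{Z}[t,t^{-1}]$. Since $\psi_{1}(a)=0$ and $u(b)\psi_{1}(b)=m$, evaluation gives $C(a)=1$ and $C(b)=1-m\varphi_{1}(b)=b^{N}$, so $C(A)=\mathrm{diag}(1,b^{N})$. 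Because $C$ has integer coefficients and both $A$ and $A^{-1}$ preserve $G$, any integer Laurent polynomial in $A$ maps $G$ into $G$; in particular $G\,C(A)\subseteq G$.

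The hard part will be upgrading $G\,C(A)\subseteq G$ to genuine membership $C(A)\in I(G)$, which additionally requires $C(A)^{-1}=\mathrm{diag}(1,b^{-N})$ to preserve $G$. Here I would use that $b$ is a unit in $R$: multiplying $b^{N}=1-m\varphi_{1}(b)$ by $b^{-N}$ gives $b^{-N}=1+m\,\varphi_{1}(b)b^{-N}$ with $\varphi_{1}(b)b^{-N}\in R$, so the Laurent polynomial $C'(t):=1+\psi_{1}(t)u(t)\varphi_{1}(t)t^{-N}\in\m{Z}[t,t^{-1}]$ satisfies $C'(A)=\mathrm{diag}(1,b^{-N})=C(A)^{-1}$ and $G\,C'(A)\subseteq G$. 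Combining the two inclusions forces $G\,C(A)=G$, so $C(A)\in I(G)$, and by the first paragraph this non-power of $A$ gives the desired contradiction. I expect the genuine subtlety to be exactly this divisibility bookkeeping: the monic hypothesis on $\psi_{2}$ is used solely to make Lemma \ref{lem:monic} (finiteness of $\m{Z}[t]/(m,\psi_{2})$) available, while the distinctness of the two minimal polynomials is precisely what supplies the Bezout integer $m$ driving the whole construction.
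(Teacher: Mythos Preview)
Your argument is correct and follows essentially the same route as the paper: from a Bezout relation $u\psi_{1}+v\psi_{2}=m$ and Lemma~\ref{lem:monic} you build the very Laurent polynomials $C(t)=1-\psi_{1}(t)u(t)\varphi_{1}(t)$ and $C'(t)=1+\psi_{1}(t)u(t)\varphi_{1}(t)t^{-N}$ that the paper writes as $-\varphi_{1}(A)\phi_{1}(A)\psi_{1}(A)+I$ and $A^{-n}\varphi_{1}(A)\phi_{1}(A)\psi_{1}(A)+I$, yielding $\mathrm{diag}(1,b^{\pm N})\in I(G)$. The only cosmetic difference is that the paper treats the case $m=1$ separately (using $(A^{n}-I)\phi_{1}(A)\psi_{1}(A)+I$), whereas you absorb it by doubling the Bezout identity to force $m>1$; your exposition of why both inclusions $G\,C(A)\subseteq G$ and $G\,C'(A)\subseteq G$ hold, and why $\mathrm{diag}(1,b^{N})$ is not a power of $A$, is in fact more explicit than the paper's.
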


\begin{proof}
Let $\psi_{1}$ and $\psi_{2}$ be minimal $\m{Z}$-coefficient polynomials of $a$ and $b$ respectively.  
Since $\psi_{i}$s are minimal and different, 
$\psi_{1}$ and $\psi_{2}$ are relatively prime in $\m{Q}[t]$.  
Then there exist $\phi_{1},\phi_{2}\in \m{Z}[t]$ and $m\in \m{Z}_{>0}$ 
such that $\phi_{1}(t)\psi_{1}(t)+\phi_{2}(t)\psi_{2}(t)=m$.  
If $m=1$, $(A^{n}-1)\phi_{1}(A)\psi_{1}(A)+I
=\left[ 
 \begin{array}{cc}
 1 & 0 \\
 0 & b^{n} \\
 \end{array} 
 \right]$ for any $n\in \m{Z}$.  
We suppose that $m>1$.  
By previous lemma \ref{lem:monic}, 
there exist $\varphi_{1}, \varphi_{2}\in \m{Z}[t,t^{-1}]$ and $n(\neq0)\in \m{Z}$ 
such that $m\varphi_{1}(t)+\psi(t)\varphi_{2}(t)+t^{n}=1$.  
Then $-\varphi_{1}(A)\phi_{1}(A)\psi_{1}(A)+I
=\left[ 
 \begin{array}{cc}
 1 & 0 \\
 0 & b^{n} \\
 \end{array} 
 \right]$
and $A^{-n}\varphi_{1}(A)\phi_{1}(A)\psi_{1}(A)+I
=\left[ 
 \begin{array}{cc}
 1 & 0 \\
 0 & b^{-n} \\
 \end{array} 
 \right]$.  
Let $G$ be an $A$-invariant sub-additive group in $\m{R}^{2}$.  
Let $(g,h)$ be an element of $G$.  
Then $\left[ 
 \begin{array}{cc}
 1 & 0 \\
 0 & b^{n} \\
 \end{array} 
 \right]$ and 
 $\left[ 
 \begin{array}{cc}
 1 & 0 \\
 0 & b^{-n} \\
 \end{array} 
 \right]$ are in $I(G)$ for some $n\in\m{N}$.  
Hence there is no $AF$-algebras $\c{A}$ with $2$-dimensional trace space 
such that $\c{A}$ is embedding on $\m{R}^{n}$ and that 
$F(\c{A})=\set{A^{n}:n\in \m{Z}}$.  
\end{proof}

\begin{pro}\label{pro:transcendal1}
Let $\alpha$ be a non-zero positive transcendal number 
and let $\beta$ be a non-zero positive number.  
We suppose that $\beta\neq 1, \alpha, \cfrac{1}{\alpha}$.   \\
Put $G=\cup_{N\in \m{N}}(\sum^{N}_{i=-N}(\m{Q}(\alpha^{2i},\beta^{2i})+\m{Z}(\alpha^{2i+1},\beta^{2i+1})))$.  \\
$G^{+}=\set{(g,h):g>0,h>0}\cup\set{(0,0)}$, and $u=(1,1)$.  
Then $(G,G^{+},u)$ is a simple dimension group 
satisfying that $\partial_{e}S(G,G^{+},(u))=\set{\varphi_{1},\varphi_{2}}$.  
\end{pro}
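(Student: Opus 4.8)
The plan is to reduce the statement to a direct application of Proposition \ref{pro:dime} with $n=2$. First I would record that $G$ is an additive subgroup of $\m{R}^{2}$: each summand $\m{Q}(\alpha^{2i},\beta^{2i})+\m{Z}(\alpha^{2i+1},\beta^{2i+1})$ is a subgroup of $\m{R}^{2}$, a finite sum of subgroups is again a subgroup, and $G$ is the directed union of these as $N\to\infty$, hence a subgroup. Moreover $u=(1,1)=(\alpha^{0},\beta^{0})$ lies in the $i=0$ term $\m{Q}(\alpha^{0},\beta^{0})$, so $(1,1)\in G$, and $G^{+}$ is precisely the positive cone appearing in Proposition \ref{pro:dime}. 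Thus everything reduces to verifying the hypothesis of that proposition: for every $\varepsilon>0$ there must exist two $\m{R}$-linearly independent elements of $G$ of Euclidean norm less than $\varepsilon$.

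The key observation is that the even-degree generators already carry rational coefficients, so their rational multiples remain in $G$. Concretely, for $N\in\m{N}$ I would set $a^{\varepsilon}_{1}=\frac1N(1,1)$ and $a^{\varepsilon}_{2}=\frac1N(\alpha^{2},\beta^{2})$; both are rational multiples of even-power generators, hence belong to $G$. They are $\m{R}$-linearly independent because $\det\begin{pmatrix}1 & 1\\ \alpha^{2} & \beta^{2}\end{pmatrix}=\beta^{2}-\alpha^{2}\neq 0$, the nonvanishing being guaranteed by $\alpha,\beta>0$ together with the hypothesis $\beta\neq\alpha$ (so that $\alpha^{2}\neq\beta^{2}$). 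Finally $\|a^{\varepsilon}_{1}\|=\frac{\sqrt2}{N}$ and $\|a^{\varepsilon}_{2}\|=\frac{\sqrt{\alpha^{4}+\beta^{4}}}{N}$, so choosing $N$ large makes both norms smaller than $\varepsilon$. This furnishes the required pair for each $\varepsilon$.

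With the hypotheses of Proposition \ref{pro:dime} in hand, I would invoke it directly to conclude that $(G,G^{+},(1,1))$ is a simple dimension group and that $\partial_{e}S(G,G^{+},(1,1))=\set{\varphi_{1},\varphi_{2}}$, which is the assertion. The only genuine content is the choice of the two small independent vectors, and here the obstacle is slight: one need only notice that the pair $(1,1),(\alpha^{2},\beta^{2})$ of even-power generators already spans $\m{R}^{2}$ and may be scaled by $1/N$ inside $G$. I would also remark that this particular statement uses neither the transcendence of $\alpha$ nor the exclusions $\beta\neq 1,\tfrac1\alpha$; only $\alpha\neq\beta$ (through $\alpha^{2}\neq\beta^{2}$) enters. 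Those stronger hypotheses, and the presence of the $\m{Z}$-coefficient odd-power generators, are what will matter later when one computes $I(G)$ and identifies the fundamental group as $\set{A^{n}:n\in\m{Z}}$; they play no role in establishing the dimension-group structure itself.
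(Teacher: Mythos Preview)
Your proof is correct and follows essentially the same route as the paper: the paper also picks rational multiples $q_{1}(1,1)$ and $q_{2}(\alpha^{2},\beta^{2})$ of the even-power generators, observes they are $\m{R}$-linearly independent (using $\alpha^{2}\neq\beta^{2}$) and can be made small, and then concludes density and the dimension-group properties just as in Proposition~\ref{pro:dime}. Your explicit invocation of Proposition~\ref{pro:dime} and your closing remark that only $\beta\neq\alpha$ is actually used here are accurate and make the argument slightly more transparent than the paper's version.
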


\begin{proof}
Let $\varepsilon>0$.  
Then there exist mon-zero rational numbers $q_{1},q_{2}$ 
such that $||(q_{1},q_{1})||<\varepsilon$, $||(q_{2}\alpha^{2},q_{2}\beta^{2})||<\varepsilon$.  
Moreover, $(q_{1},q_{1})$ and $(q_{2}\alpha^{2},q_{2}\beta^{2})$ are $\m{R}$-linearly independent.  
Then $G$ is dense in $\m{R}^{2}$.  
Obviously, $G$ is directed and unperforated, so $G$ is dimension group.  
Simplicity is also.  
\end{proof}

We denote by $\c{A}$ the unital simple $AF$-algebra 
satisfying that \\
 $(K_{0}(\c{A}),K_{0}(\c{A})^{+},[1_{\c{A}}])=(G,G^{+},u)$.

\begin{pro}\label{pro:transcendal2}
Let $\c{A}$ be as above.  
Then $F(\c{A})=\set{\left[ 
 \begin{array}{cc}
\alpha^{2n} & 0  \\
 0 & \beta^{2n} \\
 \end{array} 
 \right]:n\in \m{Z}}$.  
\end{pro}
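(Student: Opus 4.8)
The plan is to compute $F(\c{A})$ through the matrix group $I(G)$. By Proposition \ref{pro:embedding} a matrix $DU$ (with $D$ positive diagonal and $U$ a $2\times2$ permutation) lies in $F(\c{A})$ exactly when it preserves $G=\Gamma(K_{0}(\c{A}))$, so $F(\c{A})=I(G)$ and it suffices to determine which positive diagonal and anti-diagonal matrices preserve $G$. First I would fix a convenient model for $G$: let $R=\set{P=\sum_{j}c_{j}t^{j}\in\m{Q}[t,t^{-1}] : c_{j}\in\m{Z}\ \text{for odd } j}$ be the additive group of Laurent polynomials whose odd-degree coefficients are integral. Since $\alpha$ is transcendental, evaluation $P\mapsto P(\alpha)$ is injective on $R$, and $P\mapsto(P(\alpha),P(\beta))$ identifies $R$ with $G$ as ordered groups (the unit $u=(1,1)$ corresponding to $P=1$). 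Under this identification the forward inclusion is immediate: multiplication by $\mathrm{diag}(\alpha^{2n},\beta^{2n})$ corresponds to $P\mapsto t^{2n}P$, which preserves $R$ because an even shift preserves the parity of every degree; hence these matrices lie in $F(\c{A})$.

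Next I would settle the diagonal matrices. If $\mathrm{diag}(a,b)$ preserves $G$, then applying it to $u$ gives $(a,b)=(Q(\alpha),Q(\beta))$ for a unique $Q\in R$, and (reading off $\tilde P=QP$ by injectivity at $\alpha$) the preservation condition becomes $QR\subseteq R$. Membership in the group $F(\c{A})$ forces $Q^{-1}\in R$ with $Q^{-1}R\subseteq R$ as well, so $Q$ is a unit of $\m{Q}[t,t^{-1}]$, i.e. $Q=c\,t^{k}$. Testing $QR\subseteq R$ and $Q^{-1}R\subseteq R$ on the monomials $t^{j}$ shows that integrality of odd-degree coefficients forces $c\in\m{Z}$ and $c^{-1}\in\m{Z}$, hence $c=\pm1$, while a parity clash at odd degrees forces $k$ even; positivity $Q(\alpha)=c\alpha^{k}>0$ gives $c=1$. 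Thus $Q=t^{2n}$ and the only diagonal members are $\mathrm{diag}(\alpha^{2n},\beta^{2n})$.

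The hard part, and the main obstacle, is excluding the anti-diagonal matrices $M=\bigl[\begin{smallmatrix}0&a\\ b&0\end{smallmatrix}\bigr]$. Here I would first note $M^{2}=ab\,I$ is diagonal and lies in $F(\c{A})$, so by the previous step $ab=\alpha^{2n}=\beta^{2n}$ for some $n$; as $\alpha,\beta>0$ and $\alpha\neq\beta$ this forces $n=0$, i.e. $ab=1$ and $M^{2}=I$. Writing $S=\sigma(1)\in R$ for the element with $(b,a)=(S(\alpha),S(\beta))$, applying $M$ to $(\alpha,\beta)\in G$ shows $\beta=\sigma(t)(\alpha)/S(\alpha)\in\m{Q}(\alpha)$, so there is a unique $r\in\m{Q}(t)$ with $r(\alpha)=\beta$. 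Transcendence of $\alpha$ then turns the preservation relations into the identities $\sigma(P)=S\cdot(P\circ r)$ in $\m{Q}(t)$, together with $r\circ r=\mathrm{id}$ and $S\cdot(S\circ r)=1$; since $\beta\neq\alpha$, the involution $r$ is nontrivial.

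The key leverage is that $S\cdot(P\circ r)\in R$ for every $P\in R$: taking $P=t^{j}$ gives $S\,r^{j}\in\m{Q}[t,t^{-1}]$ for all $j$, and since a Laurent polynomial has poles only at $0$ and $\infty$, the order of any finite nonzero pole of $r$ would grow with $j$ and could not be absorbed by the fixed zeros of $S$. Hence $r$ has no finite nonzero pole, and being a nontrivial Möbius involution it is either $r(t)=c_{0}-t$ or $r(t)=\lambda t^{-1}$ with $c_{0},\lambda\in\m{Q}$. In the first case $S\cdot(S\circ r)=1$ forces $S$ to be a unit and then, by positivity, $S=1$; applying the membership condition to $P=q\,t^{2}$ for arbitrary $q\in\m{Q}$ produces the odd-degree coefficient $-2qc_{0}$, which cannot always be integral unless $c_{0}=0$, giving $\beta=-\alpha<0$. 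In the second case membership of $S\lambda^{j}t^{-j}$ in $R$ forces $\lvert\lambda\rvert=1$, hence $\lambda=1$ and $\beta=1/\alpha$. Both outcomes contradict $\beta>0$ and $\beta\neq\alpha,1/\alpha$, so no anti-diagonal matrix occurs and $F(\c{A})=\set{\mathrm{diag}(\alpha^{2n},\beta^{2n}):n\in\m{Z}}$. I expect the Möbius-involution classification, combined with these parity and positivity obstructions, to be the delicate core of the argument.
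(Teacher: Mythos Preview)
Your proposal is correct and reaches the right conclusion, but the anti-diagonal analysis follows a genuinely different path from the paper's.

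For the diagonal part the two arguments coincide: identify $G$ with the additive group $R\subset\m{Q}[t,t^{-1}]$ of Laurent polynomials whose odd-degree coefficients are integral, observe that a diagonal element of $F(\c{A})$ corresponds to multiplication by some $Q\in R$ with $Q^{-1}\in R$, and use that $Q$ is then a unit of $\m{Q}[t,t^{-1}]$ together with the parity/integrality constraints to force $Q=t^{2n}$.

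For the anti-diagonal part the paper works entirely with the projections $\varphi_i(G)$. From $c\,\varphi_1(G)=\varphi_2(G)$ it writes $c=\alpha^{-n_1}/p(\alpha)$ and $\beta=\alpha^{m}q_1(\alpha)/p_1(\alpha)$ in lowest terms, then argues by a denominator-degree bound: since every element of $c\,\varphi_1(G)$ has denominator dividing the fixed polynomial $p$, the powers $\beta^{k}$ force $p_1$ and $q_1$ to be constant, so $\beta=q_0\alpha^{n}$; a further comparison of ``coefficient sets'' of the various powers of $\alpha$ yields $q_0=1$ and $|n|\le 1$, i.e.\ $\beta\in\{1,\alpha,\alpha^{-1}\}$. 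Your route is more structural: you feed $M^{2}=\mathrm{diag}(ab,ab)$ back through the diagonal case to obtain $M^{2}=I$, and then encode $M$ by a single M\"obius involution $r\in\m{Q}(t)$ with $r(\alpha)=\beta$. The pole bookkeeping for $S\,r^{j}\in R$ plays the role of the paper's denominator bound, and the parity test on odd coefficients replaces its coefficient-set comparison. Your argument explains conceptually \emph{why} only $\beta\in\{1,\alpha,\alpha^{-1}\}$ can arise (these are the values $r(\alpha)$ for the admissible involutions), at the cost of importing the classification of rational involutions; the paper's computation is more hands-on but self-contained.

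One point to tighten in your Case~1 ($r(t)=c_0-t$, $c_0\neq0$): the step ``$S\cdot(S\circ r)=1$ forces $S$ to be a unit, hence $S=1$'' needs justification, since $S\circ r=S(c_0-t)$ is not a priori a Laurent polynomial. The cleanest fix is to observe that $r$ has a finite nonzero \emph{zero} at $t=c_0$, so for $j<0$ the element $S\,r^{j}=S\,(c_0-t)^{j}$ acquires a pole of order $|j|$ at $c_0$ that no fixed $S\in\m{Q}[t,t^{-1}]$ can cancel; this rules out $c_0\neq0$ immediately and makes the $P=qt^{2}$ step unnecessary.
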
  

\begin{proof}
Since $\left[ 
 \begin{array}{cc}
\alpha^{2n} & 0  \\
 0 & \beta^{2n} \\
 \end{array} 
 \right]G=G$ \\
and $\left[ 
 \begin{array}{cc}
\alpha^{2n} & 0  \\
 0 & \beta^{2n} \\
 \end{array} 
 \right]G^{+}=G^{+}$, $\set{\left[ 
 \begin{array}{cc}
\alpha^{2n} & 0  \\
 0 & \beta^{2n} \\
 \end{array} 
 \right]:n\in \m{Z}}\subset F(\c{A})$.   
Let $\left[ 
 \begin{array}{cc}
 a & 0  \\
 0 & b \\
 \end{array} 
 \right]\in F(\c{A})$.  
Then $a\in \varphi_{1}(G)$ and we can denote $a=\cfrac{1}{\alpha^{k}}\sum^{m}_{i=0}q_{i}\alpha^{i}$ 
for some $q_{i}\in \m{Q}$, $k\in \m{N}\cup\set{0}$.  
Since $a$ is invertible in $\varphi_{1}(G)$, $a=q\alpha^{n}$ for some $q\in \m{Q}$, $n\in \m{Z}$.  
Contraversely, we suppose that $n$ is odd.  
Let $l$ be a rational number satisfying that $lq \not\in \m{Z}$.  
Since $l\in \varphi_{1}(G)$, then $lq\alpha^{n}\in \varphi_{1}(G)$.  
However the set of coefficients of $\alpha^{n}$ is $\m{Z}$ because $n$ is odd.  
It leads to contradiction.  
Therefore $n$ is even.  
Since the set of coefficients of $\alpha$ is $\m{Z}$, then that of $\alpha^{n+1}$ is $q\m{Z}=\m{Z}$.  
Then $q=1$.  
Therefore $a=\alpha^{2n}$ and $b=\beta^{2n}$.  
 We suppose that $\left[ 
 \begin{array}{cc}
 0 & c  \\
 d & 0 \\
 \end{array} 
 \right]\in F(\c{A})$.  
Since $c\varphi_{1}(G)=\varphi_{2}(G)$, there exist $n_{1},n_{2}\in\m{Z}$ and polynomials $p(x),q(x)$ with $\m{Q}$-coefficient such that $cp(\alpha)\alpha^{n_{1}}=1$, $cq(\alpha)\alpha^{n_{2}}=\beta$, and that $p(0)\neq 0$, $q(0)\neq 0$.   
Therefore $c=\alpha^{-n_{1}}\cfrac{1}{p(\alpha)}$ and $\beta=\alpha^{n_{2}-n_{1}}\cfrac{q(\alpha)}{p(\alpha)}=\alpha^{n_{2}-n_{1}}\cfrac{q_{1}(\alpha)}{p_{1}(\alpha)}$, 
where $p_{1}$ and $q_{1}$ are relatively prime and where $p_{1}(0)\neq 0$, $q_{1}(0)\neq 0$.  
We suppose that $deg(p_{1})>0$.  
Let $k\in \m{N}$ satisfying that $deg(p_{1}^{k}(x))>deg(p(x))$.  
Then $\beta^{k}=\alpha^{k(n_{2}-n_{1})}\cfrac{q_{1}^{k}(\alpha)}{p_{1}^{k}(\alpha)}\in c\varphi_{1}(G)$.  
Let $g$ be an element of $c\varphi_{1}(G)$.  
Then we can put $g=\alpha^{n_{3}}\cfrac{r_{1}(\alpha)}{r_{2}(\alpha)}$ 
for some $n_{3}\in \m{N}$ and for some $r_{1}(x),r_{2}(x)\in \m{Q}[x]$, 
where $r_{1}$ and $r_{2}$ are relatively prime and where $r_{2}(0)\neq 0$.  
Since $deg(r_{2}(x))\leq deg(p)$, it leads to contradiction.  
Hence $p_{1}(x)=p_{1}(0)$.  
We consider $\cfrac{1}{\beta}$ and $\cfrac{1}{\beta^{k}}$ for some $k$, in the same with $p_{1}(x)$, 
then $q_{1}(x)=q_{1}(0)$.  
Therefore $\beta=q_{0}\alpha^{n}$ for some $q_{0}\in \m{Q}$, $n\in \m{Z}$.  
Since $\varphi_{2}(G)$ is generated by $q_{0}\alpha^{n}$, $p(\alpha)=p(0)$.  
Moreover, $c\alpha^{n_{0}}\m{Z}\alpha=\cfrac{1}{p(0)}\m{Z}\alpha^{n_{0}-n_{1}+1}$ 
and $c\alpha^{n_{0}}\m{Z}\alpha^{-1}=\cfrac{1}{p(0)}\m{Z}\alpha^{n_{0}-n_{1}-1}$ .  
Since the set of the coefficients of $\alpha^{n_{0}-n_{1}+1}$ is $q^{k_{1}}_{0}\m{Z}$ for some $k_{1}\in \m{Z}$ 
and that of $\alpha^{n_{0}-n_{1}-1}$ is $q_{0}^{k_{2}}\m{Z}$ for some $k_{2}\in \m{Z}$.  
Because $\alpha^{n_{0}-n_{1}+1}\neq \alpha^{n_{0}-n_{1}-1}$, $k_{1}\neq k_{2}$.  
Then we can get the equation $q_{0}^{k_{1}}=\cfrac{1}{p(0)}=q_{0}^{k_{2}}$.  
Therefore $p(0)=q_{0}=1$.   
Then $c=\alpha^{-n_{1}}$ and $\varphi_{2}(G)=\varphi_{1}(G)$ or $\varphi_{2}(G)=\alpha\varphi_{1}(G)$.  
Moreover, If $|n|\geq 2$, then $\alpha^{n+1}$ is not in $\varphi_{2}(G)$.  
Therefore $\beta=1, \alpha, \cfrac{1}{\alpha}$.  
It leads to contradiction in hypothesis.  
Hence $F(\c{A})=\set{\left[ 
 \begin{array}{cc}
\alpha^{2n} & 0  \\
 0 & \beta^{2n} \\
 \end{array} 
 \right]:n\in \m{Z}}$.  
\end{proof}

\begin{pro}\label{pro:transcendal3}
Let $\alpha$ be a non-zero positive transcendal number.   \\
Put $G=\cup_{N\in \m{N}}(\sum^{N}_{i=-N}(\m{Q}(\alpha^{2i},\alpha^{-2i})+\m{Z}(\alpha^{2i+1},\alpha^{-2i-1})+\m{Z}\sqrt[3]{2})(\alpha^{2i+1},0))$.  \\
$G^{+}=\set{(g,h):g>0,h>0}\cup\set{(0,0)}$, and $u=(1,1)$.  
Then $(G,G^{+},u)$ is a simple dimension group 
satisfying that $\partial_{e}S(G,G^{+},(u))=\set{\varphi_{1},\varphi_{2}}$.  
\end{pro}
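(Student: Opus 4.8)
The plan is to follow exactly the template of Proposition \ref{pro:transcendal1}: the conclusion is the same assertion about $(G,G^{+},u)$, so it suffices to verify the hypotheses of Proposition \ref{pro:dime} for the present group $G$. Concretely, I must check three things: that $u=(1,1)\in G$, that for every $\varepsilon>0$ the group $G$ contains two $\m{R}$-linearly independent vectors of Euclidean norm less than $\varepsilon$, and that the positive cone $G^{+}$ forces simplicity. The first point is immediate, since the summand with $i=0$ in the defining union is $\m{Q}(\alpha^{0},\alpha^{0})=\m{Q}(1,1)$, which already contains $(1,1)$.

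For the density hypothesis I would work inside the visible subgroup $\m{Q}(1,1)+\m{Q}(\alpha^{2},\alpha^{-2})\subset G$, coming from the indices $i=0$ and $i=1$ of the first family of generators. Given $\varepsilon>0$, choose non-zero rationals $q_{1},q_{2}$ small enough that $\|(q_{1},q_{1})\|<\varepsilon$ and $\|(q_{2}\alpha^{2},q_{2}\alpha^{-2})\|<\varepsilon$; this is possible because $\m{Q}$ is dense in $\m{R}$. The two resulting vectors are $\m{R}$-linearly independent: their determinant is $q_{1}q_{2}(\alpha^{-2}-\alpha^{2})$, which vanishes only if $\alpha^{4}=1$, i.e.\ $\alpha=1$, contradicting the transcendence (indeed the mere irrationality) of $\alpha$. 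Thus the hypotheses (i)--(ii) of Proposition \ref{pro:dime} hold, and $G$ is dense in $\m{R}^{2}$.

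With density in hand, Proposition \ref{pro:dime} (equivalently Propositions \ref{pro:dense1} and \ref{pro:dense2}) delivers at once that $(G,G^{+},u)$ is a dimension group and that $\partial_{e}S(G,G^{+},u)=\set{\varphi_{1},\varphi_{2}}$; simplicity follows from the shape of $G^{+}$, every non-zero element of which has two strictly positive coordinates and is therefore an order unit. The point worth emphasizing is that none of the more elaborate generators of $G$---the $\m{Z}(\alpha^{2i+1},\alpha^{-2i-1})$ terms and the $\m{Z}\sqrt[3]{2}\,(\alpha^{2i+1},0)$ terms---play any role in this statement; they are present only to control the fundamental group of the associated $AF$-algebra in the sequel (as in the passage from Proposition \ref{pro:transcendal1} to Proposition \ref{pro:transcendal2}), and since they merely enlarge $G$ they cannot hurt density. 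Consequently there is essentially no serious obstacle here beyond the linear-independence check; the only care needed is to confirm that the two small vectors one writes down genuinely lie in $G$ (which forces $N\geq 1$ in the union) and that $\alpha^{4}\neq 1$, both of which are immediate from the hypotheses.
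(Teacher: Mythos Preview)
Your proposal is correct and follows exactly the paper's own approach: the paper's proof of this proposition consists solely of the sentence ``Same with \ref{pro:transcendal1}'', and you have carried out precisely that template, using the vectors $(q_{1},q_{1})$ and $(q_{2}\alpha^{2},q_{2}\alpha^{-2})$ in place of $(q_{1},q_{1})$ and $(q_{2}\alpha^{2},q_{2}\beta^{2})$ and invoking Proposition~\ref{pro:dime}. Your added remarks---that the extra generators play no role here, and that the linear-independence check reduces to $\alpha^{4}\neq 1$---are accurate elaborations of what the paper leaves implicit.
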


\begin{proof}
Same with \ref{pro:transcendal1}.   
\end{proof}

We denote by $\c{A}$ the unital simple $AF$-algebra 
satisfying that \\
 $(K_{0}(\c{A}),K_{0}(\c{A})^{+},[1_{\c{A}}])=(G,G^{+},u)$.  

\begin{lem}\label{lem:root2}
Let $\alpha$ be a transcendal number 
and let $p(x),q(x),r(x)$ be in $\m{Q}[x]$.  
If $p(\alpha)(\sqrt[3]{4})+q(\alpha)(\sqrt[3]{2})+r(\alpha)=0$, 
then $p(x)=q(x)=r(x)=0$.  
\end{lem}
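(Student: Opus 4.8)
The plan is to read this as a statement about linear independence of $1,\sqrt[3]{2},\sqrt[3]{4}$ over the field $\m{Q}(\alpha)$. Write $\theta=\sqrt[3]{2}$, so that $\theta^{2}=\sqrt[3]{4}$ and $\theta^{3}=2$, and rewrite the hypothesis as $p(\alpha)\theta^{2}+q(\alpha)\theta+r(\alpha)=0$. Since $p(\alpha),q(\alpha),r(\alpha)$ all lie in $\m{Q}(\alpha)$, it suffices to prove that $\{1,\theta,\theta^{2}\}$ is linearly independent over the field $\m{Q}(\alpha)$; this immediately forces $p(\alpha)=q(\alpha)=r(\alpha)=0$. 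Once these three vanishings are in hand, the transcendence of $\alpha$ over $\m{Q}$ means that a nonzero polynomial in $\m{Q}[x]$ cannot vanish at $\alpha$, so $p(x)=q(x)=r(x)=0$ as polynomials, which is exactly the conclusion.

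Thus everything reduces to showing $[\m{Q}(\alpha)(\theta):\m{Q}(\alpha)]=3$, i.e.\ that $x^{3}-2$ remains irreducible over $\m{Q}(\alpha)$. Since $x^{3}-2$ is a cubic, it is irreducible over $\m{Q}(\alpha)$ as soon as it has no root there, so I would show directly that no element of $\m{Q}(\alpha)$ is a cube root of $2$. Suppose $\rho\in\m{Q}(\alpha)$ satisfies $\rho^{3}=2$ and write $\rho=f(\alpha)/g(\alpha)$ with $f,g\in\m{Q}[x]$ and $g(\alpha)\neq 0$. Then $f(\alpha)^{3}=2\,g(\alpha)^{3}$, so $\alpha$ is a root of $f(x)^{3}-2g(x)^{3}\in\m{Q}[x]$; since $\alpha$ is transcendental this polynomial must be identically zero, that is, $f^{3}=2g^{3}$ in $\m{Q}[x]$. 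Both $f$ and $g$ are nonzero, so their degrees agree (as $3\deg f=3\deg g$), and comparing leading coefficients gives $(\mathrm{lc}(f)/\mathrm{lc}(g))^{3}=2$, hence $\sqrt[3]{2}\in\m{Q}$, contradicting its irrationality. Therefore $x^{3}-2$ has no root in $\m{Q}(\alpha)$, it is irreducible over $\m{Q}(\alpha)$, and $\{1,\theta,\theta^{2}\}$ is a $\m{Q}(\alpha)$-basis of $\m{Q}(\alpha)(\theta)$; in particular it is linearly independent over $\m{Q}(\alpha)$.

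The main obstacle is precisely this irreducibility step: it is the only place where the transcendence of $\alpha$ is genuinely used, and it encodes the fact that the purely transcendental extension $\m{Q}(\alpha)$ introduces no new algebraic numbers, so the algebraic irrationality $\sqrt[3]{2}$ cannot be captured as a rational function of $\alpha$. Everything else is bookkeeping: extracting the three vanishings $p(\alpha)=q(\alpha)=r(\alpha)=0$ from linear independence, and then passing back from these to $p=q=r=0$ by transcendence. I would present the leading-coefficient computation in a single line, since it is routine, and reserve the written detail for justifying that ``no root in $\m{Q}(\alpha)$'' implies ``irreducible over $\m{Q}(\alpha)$'' for the cubic, together with the reduction carried out in the first paragraph.
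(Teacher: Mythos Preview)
Your proof is correct, but it follows a genuinely different route from the paper's. You argue via field theory: the claim is that $1,\sqrt[3]{2},\sqrt[3]{4}$ are linearly independent over $\m{Q}(\alpha)$, which amounts to the irreducibility of $x^{3}-2$ over $\m{Q}(\alpha)$; for a cubic this reduces to ``no root in $\m{Q}(\alpha)$,'' and you dispatch that by a leading-coefficient comparison in $\m{Q}[x]$ after clearing denominators and invoking transcendence. The paper instead mimics the classical descent proof that $\sqrt[3]{2}\notin\m{Q}$, carried out in $\m{Z}[x]$: it clears to integer coefficients with $p,q,r$ coprime, multiplies by the ``norm conjugate'' to obtain $4p^{3}(\alpha)+2q^{3}(\alpha)+r^{3}(\alpha)-6p(\alpha)q(\alpha)r(\alpha)=0$, lifts this to a polynomial identity by transcendence, and then argues modulo $2$ that successively $2\mid r$, $2\mid q$, $2\mid p$, contradicting coprimality. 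Your argument is shorter and more conceptual, isolating exactly the fact being used (that a purely transcendental extension cannot contain $\sqrt[3]{2}$); the paper's approach is more hands-on and avoids any appeal to degrees of field extensions, at the cost of the explicit norm computation and the $2$-adic descent.
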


\begin{proof}
It is sufficient to show in the case that $p(x),q(x),r(x)$ are $\m{Z}$-coefficient and relatively prime in $\m{Q}(x)$.  
Multiply $(-p(\alpha)r(\alpha)+q^{2}(\alpha))(\sqrt[3]{4})+(-q(\alpha)r(\alpha)+2p^{2}(\alpha))(\sqrt[3]{2})+(r^{2}(\alpha)-2p(\alpha)q(\alpha))$.  
Then we can get $4p^{3}(\alpha)+2q^{3}(\alpha)+r^{3}(\alpha)-6p(\alpha)q(\alpha)r(\alpha)=0$.  
Therefore $r(x)=2r_{1}(x)$ for some $r_{1}(x)\in \m{Z}[x]$.  
Substitute this in $4p^{3}(\alpha)+2q^{3}(\alpha)+r^{3}(\alpha)-6p(\alpha)q(\alpha)r(\alpha)=0$, 
then we can get $2p^{3}(\alpha)+q^{3}(\alpha)+4r^{3}_{1}(\alpha)-6p(\alpha)q(\alpha)r(\alpha)=0$.  
Then  $q(x)=2q_{1}(x)$ for some $q_{1}(x)\in \m{Z}[x]$ and  $p(x)=2p_{1}(x)$ for some $p_{1}(x)\in \m{Z}[x]$.   
Therefore this contradicts to the fact that $p(x),q(x),r(x)$ are relatively prime in $\m{Q}[x]$.  
Hence $p(x)=q(x)=r(x)=0$.  
\end{proof}

\begin{pro}\label{pro:transcendal4}
Let $\c{A}$ be as above.  
Then $F(\c{A})=\set{\left[ 
 \begin{array}{cc}
\alpha^{2n} & 0  \\
 0 & \cfrac{1}{\alpha^{2n}} \\
 \end{array} 
 \right]:n\in \m{Z}}$.  
\end{pro}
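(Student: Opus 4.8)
The plan is to apply Proposition \ref{pro:embedding}: identifying $K_{0}(\c{A})$ with $G\subseteq\m{R}^{2}$, a matrix $DU$ with $D$ positive diagonal and $U$ a permutation lies in $F(\c{A})$ precisely when $G\,DU=G$, and positivity of the entries automatically preserves $G^{+}$. Since the trace space is $2$-dimensional, $U$ is either the identity or the transposition, so I would treat separately the diagonal matrices $\mathrm{diag}(a,b)$ and the anti-diagonal ones sending $(g,h)\mapsto(bh,ag)$, with $a,b>0$. First I record the coordinate projections. Unwinding the definition of $G$ from Proposition \ref{pro:transcendal3}, one gets $\varphi_{1}(G)=\set{\sum_{k}c_{k}\alpha^{k}:\ c_{k}\in\m{Q}\ (k\ \text{even}),\ c_{k}\in\m{Z}+\m{Z}\sqrt[3]{2}\ (k\ \text{odd})}$ and $\varphi_{2}(G)=\set{\sum_{k}c_{k}\alpha^{k}:\ c_{k}\in\m{Q}\ (k\ \text{even}),\ c_{k}\in\m{Z}\ (k\ \text{odd})}$; here $\varphi_{2}(G)$ is exactly the group from the proof of \ref{pro:transcendal2}, whereas $\varphi_{1}(G)$ carries the extra factor $\sqrt[3]{2}$. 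By Lemma \ref{lem:root2} the elements $1,\sqrt[3]{2},\sqrt[3]{4}$ are independent over $\m{Q}(\alpha)$, so $g\mapsto\varphi_{1}(g)$ is injective on $G$ and the unique element of $G$ with first coordinate $\alpha^{2m}$ is $(\alpha^{2m},\alpha^{-2m})$. This coupling between the two coordinates is what I would exploit throughout.

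The inclusion $\supseteq$ is immediate, since $\mathrm{diag}(\alpha^{2n},\alpha^{-2n})$ merely shifts the defining generators of $G$ and hence preserves $G$ and $G^{+}$. For the converse in the diagonal case, suppose $\mathrm{diag}(a,b)G=G$, so that $a\varphi_{1}(G)=\varphi_{1}(G)$ and thus $a,a^{-1}\in\varphi_{1}(G)\subseteq\m{Q}(\sqrt[3]{2})[\alpha,\alpha^{-1}]$. Because $\alpha$ is transcendental over the number field $\m{Q}(\sqrt[3]{2})$ (with Lemma \ref{lem:root2} fixing the basis $1,\sqrt[3]{2},\sqrt[3]{4}$), the units of this Laurent ring are the monomials, so $a=c\alpha^{n}$ with $c\in\m{Q}(\sqrt[3]{2})$. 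A parity argument rules out odd $n$: since $la\in a\varphi_{1}(G)=\varphi_{1}(G)$ for every $l\in\m{Q}$, an odd $n$ would force $\m{Q}c\subseteq\m{Z}+\m{Z}\sqrt[3]{2}$, which is impossible for $c\neq0$. With $n$ even, reading off the odd-degree coefficients of $a\alpha$ and of $a^{-1}\alpha$ gives $c,c^{-1}\in\m{Z}+\m{Z}\sqrt[3]{2}$; expanding $c\,c^{-1}=1$ and invoking the independence of $1,\sqrt[3]{2},\sqrt[3]{4}$ from Lemma \ref{lem:root2} forces $c=1$, so $a=\alpha^{2m}$. Finally, applying $\mathrm{diag}(a,b)$ to $u=(1,1)$ shows $(\alpha^{2m},b)\in G$, and injectivity of $\varphi_{1}$ on $G$ forces $b=\alpha^{-2m}$, as required.

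It remains to exclude the anti-diagonal case, which I expect to be the main obstacle: with the second coordinate $\alpha^{-1}$-symmetric to the first, the transposition would otherwise preserve $G$, and only the asymmetric factor $\sqrt[3]{2}$ can break this. Suppose $(g,h)\mapsto(bh,ag)$ preserves $G$; then $b\varphi_{2}(G)=\varphi_{1}(G)$, equivalently $b^{-1}\varphi_{1}(G)=\varphi_{2}(G)$, whence $b\in\varphi_{1}(G)$ and $b^{-1}\in\varphi_{2}(G)\subseteq\m{Q}[\alpha,\alpha^{-1}]$. As before $b=\gamma\alpha^{n}$ is a monomial, and $b^{-1}=\gamma^{-1}\alpha^{-n}\in\m{Q}[\alpha,\alpha^{-1}]$ forces $\gamma\in\m{Q}$. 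Since $\alpha^{2}\varphi_{2}(G)=\varphi_{2}(G)$, I would split on the parity of $n$. If $n$ is even, then $b\varphi_{2}(G)=\gamma\varphi_{2}(G)$ has odd-degree coefficient group $\gamma\m{Z}$, which is cyclic and cannot equal the rank-two group $\m{Z}+\m{Z}\sqrt[3]{2}$. If $n$ is odd, then $\alpha^{n}\varphi_{2}(G)$ interchanges the even and odd coefficient rings, so $b\varphi_{2}(G)$ has even-degree coefficient group $\gamma\m{Z}$, which cannot equal $\m{Q}$. In either case $b\varphi_{2}(G)\neq\varphi_{1}(G)$, a contradiction. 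Hence $F(\c{A})$ contains no anti-diagonal matrix, and together with the diagonal case this yields $F(\c{A})=\set{\mathrm{diag}(\alpha^{2n},\alpha^{-2n}):n\in\m{Z}}$.
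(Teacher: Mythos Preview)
Your argument is correct. The overall skeleton matches the paper's---show the easy inclusion, then treat diagonal and anti-diagonal elements of $I(G)$ separately via Proposition~\ref{pro:embedding}---but the internal reasoning in each branch differs from the paper's.

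For the diagonal case, the paper works on the \emph{second} coordinate first: since $\varphi_{2}(G)$ has exactly the structure analysed in Proposition~\ref{pro:transcendal2}, that earlier argument gives $b=\alpha^{2n}$ immediately, after which the coupling in $G$ forces $a=(1+l\sqrt[3]{2})\alpha^{-2n}$ and a short contradiction yields $l=0$. You instead attack $\varphi_{1}$ directly, observing that $\varphi_{1}(G)\subseteq\m{Q}(\sqrt[3]{2})[\alpha,\alpha^{-1}]$ and invoking the standard description of units in a Laurent polynomial ring over a field to get $a=c\alpha^{n}$; a parity check and the relation $cc^{-1}=1$ in $\m{Z}+\m{Z}\sqrt[3]{2}$ then pin down $c=1$, and injectivity of $\varphi_{1}$ on $G$ recovers $b$. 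Your route is more self-contained (it does not appeal to Proposition~\ref{pro:transcendal2}) and uses a cleaner structural fact in place of the paper's ad hoc step.

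For the anti-diagonal case the contrast is similar. The paper rationalises $c^{-1}$ explicitly in $\m{Q}(\sqrt[3]{2})$, uses Lemma~\ref{lem:root2} to kill the $\sqrt[3]{4}$-component, and concludes that any $c$ with $c,c^{-1}\in\varphi_{1}(G)$ is a power of $\alpha$, whence $c\varphi_{1}(G)\neq\varphi_{2}(G)$ because the left side still carries $\sqrt[3]{2}$-terms. You avoid the rationalisation entirely: from $b\in\varphi_{1}(G)$ and $b^{-1}\in\varphi_{2}(G)\subseteq\m{Q}[\alpha,\alpha^{-1}]$ the Laurent-unit argument gives $b=\gamma\alpha^{n}$ with $\gamma\in\m{Q}$, and a parity split comparing the rank (or divisibility) of coefficient groups rules out $b\varphi_{2}(G)=\varphi_{1}(G)$. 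This is shorter and arguably more transparent, while the paper's computation has the virtue of exhibiting concretely how Lemma~\ref{lem:root2} enters.
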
  

\begin{proof}
Since $\left[ 
 \begin{array}{cc}
\alpha^{2n} & 0  \\
 0 & \cfrac{1}{\alpha^{2n}} \\
 \end{array} 
 \right]G=G$ \\
and $\left[ 
 \begin{array}{cc}
\alpha^{2n} & 0  \\
 0 & \cfrac{1}{\alpha^{2n}} \\
 \end{array} 
 \right]G^{+}=G^{+}$, $\set{\left[ 
 \begin{array}{cc}
\alpha^{2n} & 0  \\
 0 & \cfrac{1}{\alpha^{2n}} \\
 \end{array} 
 \right]:n\in \m{Z}}\subset F(\c{A})$.   
Let $\left[ 
 \begin{array}{cc}
 a & 0  \\
 0 & b \\
 \end{array} 
 \right]\in F(\c{A})$.  
As is the same with \ref{pro:transcendal2}, $b=\alpha^{2n}$ for some $n\in \m{Z}$.  
Then we can put $a=(1+l\sqrt[3]{2})\alpha^{-2n}$ for some $l\in \m{Z}$.  
Contraversely we suppose that $l\neq 0$.  
For any $n\in \m{Z}$, the coefficients of $\alpha^{n}$ includes $\sqrt[3]{2}$ in $a\varphi_{1}(G)$.  
It leads to contradiction to $a\varphi_{1}(G)=\varphi_{1}(G)$.  
Then $l=0$ and $a=\alpha^{-2n}$.  
 We suppose that $\left[ 
 \begin{array}{cc}
 0 & c  \\
 d & 0 \\
 \end{array} 
 \right]\in F(\c{A})$.  
 Then $c\varphi_{1}(G)=\varphi_{2}(G)$ and $\varphi_{1}(G)=d\varphi_{2}(G)$.  
 Therefore $c\in (\varphi_{1}(G)\setminus \set{0})^{-1}\cap \varphi_{2}(G)$ and $d\in \varphi_{1}(G)\cap (\varphi_{2}(G)\setminus \set{0})^{-1}$.  
Since $\varphi_{1}(G)\supset \varphi_{2}(G)$, 
then we will consider that $\varphi_{1}(G)\cap (\varphi_{1}(G)\setminus \set{0})^{-1}$.  
Let $c\in \varphi_{1}(G)\cap (\varphi_{1}(G)\setminus \set{0})^{-1}$.  
Since $c\in \varphi_{1}(G)$, then we can put $c=\cfrac{1}{\alpha^{N}}(\sqrt[3]{2}p(\alpha)+q(\alpha))$ for some $p(x),q(x)\in \m{Q}[x]$ and $N\in \m{Z}_{\geq 0}$.  
Then $\cfrac{1}{c}=\cfrac{\alpha^{N}((\sqrt[3]{4})p^{2}(\alpha)+\sqrt[3]{2}p(\alpha)q(\alpha)+q^{2}(\alpha))}{2p^{3}(\alpha)+q^{3}(\alpha)}$.  
Since $\cfrac{1}{c}\in \varphi_{1}(G)$, then $\cfrac{1}{c}=\cfrac{1}{\alpha^{N_{1}}}(\sqrt[3]{2}p_{1}(\alpha)+q_{1}(\alpha))$ 
for some $p_{1}(x),q_{1}(x)\in \m{Q}[x]$ and $N_{1}\in \m{Z}_{\geq 0}$.  
By \ref{lem:root2}, $p(x)=0$.  
Then $c=\cfrac{q(\alpha)}{\alpha^{N}}$, $\cfrac{1}{c}=\cfrac{\alpha^{N}}{q(\alpha)}$, 
and $q(\alpha)\neq 0$.  
Since $\cfrac{1}{c}\in \varphi_{1}(G)$, then $\cfrac{1}{c}=\cfrac{1}{\alpha^{N_{2}}}(\sqrt[3]{2}p_{2}(\alpha)+q_{2}(\alpha))$ 
for some $p_{2}(x),q_{2}(x)\in \m{Q}[x]$ and $N_{2}\in \m{Z}_{\geq 0}$.  
By \ref{lem:root2}, $p_{2}(x)=0$ and $q_{\alpha}q_{2}(\alpha)-\alpha^{N+N_{2}}=0$.  
Since $\alpha$ is a transcendal number, $q(\alpha)=\alpha^{n_{0}}$ for some $n_{0}\in \m{Z}_{\geq 0}$.  
Then $c=\alpha^{n}$ for some $n \in \m{Z}$.  
But $c\varphi_{1}(G)\neq\varphi_{2}(G)$.  
Then $F(\c{A})\subset \set{\left[ 
 \begin{array}{cc}
\alpha^{2n} & 0  \\
 0 & \cfrac{1}{\alpha^{2n}} \\
 \end{array} 
 \right]:n\in \m{Z}}$. 
 Hence $F(\c{A})=\set{\left[ 
 \begin{array}{cc}
\alpha^{2n} & 0  \\
 0 & \cfrac{1}{\alpha^{2n}} \\
 \end{array} 
 \right]:n\in \m{Z}}$.  
\end{proof}

By replacing $\cfrac{1}{\alpha}$ to $\alpha$ in \ref{pro:transcendal3} and \ref{pro:transcendal4}, 
we can show the next proposition.  

\begin{pro}\label{pro:transcendal5}
Let $\alpha$ be an transcendal number. 
Then there exists an $AF$-algebra such that 
 $F(\c{A})=\set{\left[ 
 \begin{array}{cc}
\alpha^{2n} & 0  \\
 0 & \alpha^{2n} \\
 \end{array} 
 \right]:n\in \m{Z}}$.  
\end{pro}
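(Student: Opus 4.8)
The plan is to repeat the construction of Proposition \ref{pro:transcendal3} and the computation of Proposition \ref{pro:transcendal4} after substituting $\alpha$ for $\frac{1}{\alpha}$ everywhere. Explicitly, I would set
\[
G=\bigcup_{N\in \m{N}}\Big(\sum^{N}_{i=-N}\big(\m{Q}(\alpha^{2i},\alpha^{2i})+\m{Z}(\alpha^{2i+1},\alpha^{2i+1})+\m{Z}\sqrt[3]{2}(\alpha^{2i+1},0)\big)\Big),
\]
with $G^{+}=\set{(g,h):g>0,h>0}\cup\set{(0,0)}$ and $u=(1,1)$, and let $\c{A}$ be the unital simple $AF$-algebra with $(K_{0}(\c{A}),K_{0}(\c{A})^{+},[1_{\c{A}}])=(G,G^{+},u)$. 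The one place where the substitution is \emph{not} purely cosmetic is the proof that $G$ is dense in $\m{R}^{2}$: in \ref{pro:transcendal3} one uses the independent small vectors $(q_{1},q_{1})$ and $(q_{2}\alpha^{2},q_{2}\alpha^{-2})$, but with $\beta=\alpha$ the second vector becomes parallel to $(1,1)$. Instead I would keep $(q_{1},q_{1})$ and take as the second generator $\sqrt[3]{2}(\alpha^{2i+1},0)$, whose norm drops below any prescribed $\varepsilon$ once $i\to-\infty$ (if $\alpha>1$) or $i\to+\infty$ (if $\alpha<1$); these two elements of $G$ are $\m{R}$-linearly independent and arbitrarily small, so $G$ is dense by \ref{pro:dense1} and $(G,G^{+},u)$ is a simple dimension group with $\partial_{e}S(G,G^{+},u)=\set{\varphi_{1},\varphi_{2}}$ by \ref{pro:dime}.

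For the inclusion $\supseteq$, the scalar matrices $\alpha^{2n}I$ act on each of the three families of generators by the index shift $i\mapsto i+n$, hence preserve $G$ and $G^{+}$; by \ref{pro:embedding} they belong to $F(\c{A})$. For the reverse inclusion I would argue as in \ref{pro:transcendal2} and \ref{pro:transcendal4}. Given a diagonal element $\mathrm{diag}(a,b)\in F(\c{A})$, the entry $b$ is an invertible element of $\varphi_{2}(G)=\bigcup_{N}\sum_{i}(\m{Q}\alpha^{2i}+\m{Z}\alpha^{2i+1})$, so the transcendence of $\alpha$ together with the parity bookkeeping on the coefficients of the odd powers forces $b=\alpha^{2n}$; a short computation with the generators then yields $a=(1+l\sqrt[3]{2})\alpha^{2n}$ for some $l\in\m{Z}$, and $a\varphi_{1}(G)=\varphi_{1}(G)$ forces $l=0$, because in $\varphi_{1}(G)$ the factor $\sqrt[3]{2}$ is attached only to odd powers of $\alpha$, whereas $l\sqrt[3]{2}\alpha^{2n}$ would carry it on an even power. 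Hence $a=b=\alpha^{2n}$.

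The step I expect to be the main obstacle is excluding the anti-diagonal (permutation) elements $\left[\begin{array}{cc}0&c\\ d&0\end{array}\right]$, and here everything rests on the single asymmetry that the summand $\m{Z}\sqrt[3]{2}(\alpha^{2i+1},0)$ feeds into the first coordinate only. Indeed, without it the flip $(g,h)\mapsto(h,g)$ would be a symmetry of $G$ and the swap matrix would enter $F(\c{A})$; with it one has $\varphi_{1}(G)\supsetneq\varphi_{2}(G)$. Such an element would force $c\varphi_{1}(G)=\varphi_{2}(G)$ and $d\varphi_{2}(G)=\varphi_{1}(G)$, so $c\in\varphi_{1}(G)\cap(\varphi_{1}(G)\setminus\set{0})^{-1}$ after using $\varphi_{2}(G)\subseteq\varphi_{1}(G)$. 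Writing $c=\cfrac{1}{\alpha^{N}}(\sqrt[3]{2}p(\alpha)+q(\alpha))$ with $p,q\in\m{Q}[x]$, rationalizing $\cfrac{1}{c}$, and imposing $\cfrac{1}{c}\in\varphi_{1}(G)$, Lemma \ref{lem:root2} (the $\m{Q}(\alpha)$-linear independence of $1,\sqrt[3]{2},\sqrt[3]{4}$) kills the $\sqrt[3]{4}$ term, giving $p=0$ and then $c=\alpha^{n}$. But $\alpha^{n}\varphi_{1}(G)=\varphi_{1}(G)\neq\varphi_{2}(G)$, a contradiction, so no anti-diagonal element occurs. Combining the two cases gives $F(\c{A})=\set{\alpha^{2n}I:n\in\m{Z}}$. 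The delicate point throughout is this anti-diagonal bookkeeping: verifying that \ref{lem:root2} really applies after clearing denominators, and that the surviving candidate $c=\alpha^{n}$ indeed violates $c\varphi_{1}(G)=\varphi_{2}(G)$.
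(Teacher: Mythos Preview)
Your proposal is correct and follows exactly the route the paper indicates: the paper's entire proof of this proposition is the single sentence ``By replacing $\frac{1}{\alpha}$ to $\alpha$ in \ref{pro:transcendal3} and \ref{pro:transcendal4}'', and you have carried out precisely that substitution. You are in fact more careful than the paper in one place: you correctly observe that after the substitution the two small vectors $(q_{1},q_{1})$ and $(q_{2}\alpha^{2},q_{2}\beta^{2})$ used for density in \ref{pro:transcendal1} become parallel, and you supply the legitimate replacement pair $(q_{1},q_{1})$ and $\sqrt[3]{2}(\alpha^{2i+1},0)$, which the paper's one-line proof silently passes over.
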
  

\begin{pro}\label{pro:transcendal6}
Let $\alpha$ be a non-zero positive transcendal number .   
Then there exists a unital $AF$-algebra such that \\
$F(\c{A})=\set{\left[ 
 \begin{array}{cc}
\alpha^{n} & 0  \\
 0 & 1 \\
 \end{array} 
 \right]:n\in \m{Z}}$.  
\end{pro}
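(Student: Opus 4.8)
The plan is to produce an explicit dense subgroup $G$ of $\m{R}^{2}$, realize it as the pointed ordered $K_{0}$-group of a unital simple $AF$-algebra by the Elliott classification, and then read off the fundamental group from Proposition \ref{pro:embedding}. Concretely, I would take
$$G=\bigcup_{N\in\m{N}}\left(\sum_{i=-N}^{N}\m{Z}(\alpha^{i},0)+\m{Z}(0,1)+\m{Z}(0,\alpha)\right),$$
which is nothing but the product $G=\m{Z}[\alpha,\alpha^{-1}]\times(\m{Z}+\m{Z}\alpha)$ inside $\m{R}^{2}$, together with $G^{+}=\set{(g,h):g>0,h>0}\cup\set{(0,0)}$ and $u=(1,1)\in G$. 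The reason for decoupling the two coordinates is that the first projection $\varphi_{1}(G)=\m{Z}[\alpha,\alpha^{-1}]$ should carry exactly the scaling symmetry $\set{\alpha^{n}:n\in\m{Z}}$ I want in the first slot, while the second projection $\varphi_{2}(G)=\m{Z}+\m{Z}\alpha$ should be rigid so as to pin the second slot to $1$.

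First I would verify the hypotheses of Proposition \ref{pro:dime}. Since $\alpha$ is transcendental it is irrational, so $\m{Z}+\m{Z}\alpha\subset\m{Z}[\alpha,\alpha^{-1}]$ is dense in $\m{R}$; hence for every $\varepsilon>0$ there are nonzero $u\in\m{Z}[\alpha,\alpha^{-1}]$ and $v\in\m{Z}+\m{Z}\alpha$ with $|u|,|v|<\varepsilon$, and then $(u,0),(0,v)\in G$ are $\m{R}$-linearly independent of Euclidean norm $<\varepsilon$. Thus $(G,G^{+},u)$ is a simple dimension group with $\partial_{e}S(G,G^{+},u)=\set{\varphi_{1},\varphi_{2}}$, so there is a unital simple $AF$-algebra $\c{A}$ with $(K_{0}(\c{A}),K_{0}(\c{A})^{+},[1_{\c{A}}])=(G,G^{+},u)$; it has $2$-dimensional trace space and is embedding on $\m{R}^{2}$, with $\Gamma$ the inclusion $G\hookrightarrow\m{R}^{2}$.

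By Proposition \ref{pro:embedding} it then remains to decide which matrices $DU$ ($D$ positive invertible diagonal, $U$ a permutation) satisfy $G\,DU=G$. For a diagonal $DU=\mathrm{diag}(a,b)$ the product structure gives $G\,DU=a\varphi_{1}(G)\times b\varphi_{2}(G)$, so the condition splits as $a\m{Z}[\alpha,\alpha^{-1}]=\m{Z}[\alpha,\alpha^{-1}]$ and $b(\m{Z}+\m{Z}\alpha)=\m{Z}+\m{Z}\alpha$. The second equation forces $b=1$: from $b\cdot 1\in\m{Z}+\m{Z}\alpha$ we get $b=m+n\alpha$, transcendence of $\alpha$ (so that $1,\alpha,\alpha^{2}$ are $\m{Q}$-independent) forces $n=0$ upon imposing $b\alpha\in\m{Z}+\m{Z}\alpha$, and invertibility forces $m=1$. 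For the swap $U$ one computes $G\,DU=d\varphi_{2}(G)\times c\varphi_{1}(G)$, and equality with $G$ would require $d(\m{Z}+\m{Z}\alpha)=\m{Z}[\alpha,\alpha^{-1}]$; this is impossible since multiplication by $d\neq0$ preserves $\m{Z}$-rank, while the left-hand group has rank $2$ and the right-hand group has infinite rank. Hence no permutation part occurs.

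The crux, and the step I expect to be the main obstacle, is the first diagonal equation: I must show $\set{a>0:a\m{Z}[\alpha,\alpha^{-1}]=\m{Z}[\alpha,\alpha^{-1}]}=\set{\alpha^{n}:n\in\m{Z}}$. The inclusion $\supseteq$ is immediate. For $\subseteq$, the identity $a\m{Z}[\alpha,\alpha^{-1}]=\m{Z}[\alpha,\alpha^{-1}]$ yields $a=a\cdot 1\in\m{Z}[\alpha,\alpha^{-1}]$ and, writing $1=a\cdot r$, also $a^{-1}=r\in\m{Z}[\alpha,\alpha^{-1}]$, so $a$ is a unit of the ring $\m{Z}[\alpha,\alpha^{-1}]$. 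Transcendence is essential here: it provides a ring isomorphism $\m{Z}[\alpha,\alpha^{-1}]\cong\m{Z}[t,t^{-1}]$, whose unit group is $\set{\pm t^{n}:n\in\m{Z}}$, and selecting the positive values (using $\alpha>0$) leaves exactly $\set{\alpha^{n}:n\in\m{Z}}$. Combining the three computations, Proposition \ref{pro:embedding} gives $F(\c{A})=\set{\mathrm{diag}(\alpha^{n},1):n\in\m{Z}}$, which is the asserted group.
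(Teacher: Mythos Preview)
Your argument is correct and follows the same overall strategy as the paper: build $G$ as the product of $\m{Z}[\alpha,\alpha^{-1}]$ with a rank-$2$ dense subgroup of $\m{R}$, invoke Proposition~\ref{pro:dime} to realize it as the $K_{0}$-group of a simple unital $AF$-algebra, and compute $I(G)$ via Proposition~\ref{pro:embedding}. The paper takes the second factor to be $\m{Z}+\m{Z}\beta$ for a positive transcendental $\beta$ lying outside the algebraic closure of $\m{Q}[\alpha]$, whereas you take $\m{Z}+\m{Z}\alpha$. Your choice is more economical: a single transcendental suffices, since the rigidity of the second coordinate ($b(\m{Z}+\m{Z}\alpha)=\m{Z}+\m{Z}\alpha\Rightarrow b=1$) only uses that $1,\alpha,\alpha^{2}$ are $\m{Q}$-independent, and the swap is excluded by your clean rank comparison $\mathrm{rk}_{\m{Z}}(\m{Z}+\m{Z}\alpha)=2\neq\infty=\mathrm{rk}_{\m{Z}}\m{Z}[\alpha,\alpha^{-1}]$. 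The paper's introduction of an algebraically independent $\beta$ buys nothing extra here; it would only matter if one wanted to avoid the rank argument and instead exclude the swap by showing no scalar can carry $\varphi_{1}(G)$ onto $\varphi_{2}(G)$ via an algebraic-independence obstruction.
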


\begin{proof}
Let $\beta$ be a non-zero positive transcendal number which is not including in the algebraic closure of $\m{Q}[\alpha]$.  
Put $G=\set{(\sum^{N}_{i=-N}a_{i}\alpha^{i},l_{1}+l_{2}\beta):a_{i},l_{1},l_{2}\in \m{Z}}$.  
Then $I(G)=\set{\left[ 
 \begin{array}{cc}
\alpha^{n} & 0  \\
 0 & 1 \\
 \end{array} 
 \right]:n\in \m{Z}}$.  
\end{proof}

\begin{thm}\label{thm:transcendal7}
Let $\alpha$ be a non-zero positive transcendal number and let $\beta >0$.  
Then there exists a unital $AF$-algebra such that \\
$F(\c{A})=\set{\left[ 
 \begin{array}{cc}
\alpha^{n} & 0  \\
 0 & \beta^{n} \\
 \end{array} 
 \right]:n\in \m{Z}}$.  
\end{thm}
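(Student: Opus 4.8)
The plan is to prove the theorem by a case analysis on $\beta$, reducing each case to one of the earlier constructions, and using the observation that replacing $\alpha,\beta$ by their positive square roots turns a fundamental group consisting of \emph{even} powers of $\mathrm{diag}(\alpha,\beta)$ into one consisting of \emph{all} integer powers. The common reduction is this: whenever we exhibit a simple dimension group $G\subset\m{R}^{2}$ with unit $(1,1)$ and $\partial_{e}S(G,G^{+},u)=\set{\varphi_{1},\varphi_{2}}$, Elliott's theorem furnishes a unital simple $AF$-algebra $\c{A}$ with $(K_{0}(\c{A}),K_{0}(\c{A})^{+},[1_{\c{A}}])=(G,G^{+},u)$, this $\c{A}$ is embedding on $\m{R}^{2}$, and by Proposition~\ref{pro:embedding} we have $F(\c{A})=I(G)$. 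Hence in every case it is enough to produce a group $G$ with $I(G)=\set{\mathrm{diag}(\alpha^{n},\beta^{n}):n\in\m{Z}}$, and Propositions~\ref{pro:transcendal2}--\ref{pro:transcendal6} already do this up to the square-root adjustment.

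First I would record the substitution. If $\alpha>0$ is transcendental then $\sqrt{\alpha}$ is again positive transcendental, since an algebraic square root would force $\alpha=(\sqrt\alpha)^{2}$ to be algebraic; and $\sqrt\beta>0$ for every $\beta>0$. Each of Propositions~\ref{pro:transcendal2}, \ref{pro:transcendal4} and \ref{pro:transcendal5} yields an $AF$-algebra whose fundamental group is generated by $\mathrm{diag}(\gamma^{2},\delta^{2})$ for its input parameters $(\gamma,\delta)$; feeding in $\gamma=\sqrt\alpha$ and $\delta=\sqrt\beta$ therefore gives a fundamental group generated by $\mathrm{diag}(\alpha,\beta)$, namely $\set{\mathrm{diag}(\alpha^{n},\beta^{n}):n\in\m{Z}}$.

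Then I would split into four exhaustive cases. If $\beta=1$, Proposition~\ref{pro:transcendal6} already gives $F(\c{A})=\set{\mathrm{diag}(\alpha^{n},1):n\in\m{Z}}$ directly, with no square root needed, because that construction treats the two coordinates asymmetrically and already produces all integer powers. If $\beta=\alpha$, apply Proposition~\ref{pro:transcendal5} with parameter $\sqrt\alpha$ to get $F(\c{A})=\set{\mathrm{diag}(\alpha^{n},\alpha^{n}):n\in\m{Z}}$. If $\beta=\tfrac{1}{\alpha}$, apply Proposition~\ref{pro:transcendal4} (via \ref{pro:transcendal3}) with parameter $\sqrt\alpha$ to get $F(\c{A})=\set{\mathrm{diag}(\alpha^{n},\alpha^{-n}):n\in\m{Z}}$. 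In the remaining case $\beta\notin\set{1,\alpha,\tfrac{1}{\alpha}}$, apply Proposition~\ref{pro:transcendal2} with parameters $(\sqrt\alpha,\sqrt\beta)$; its hypotheses $\sqrt\beta\neq 1,\sqrt\alpha,\tfrac{1}{\sqrt\alpha}$ are equivalent to $\beta\neq 1,\alpha,\tfrac{1}{\alpha}$ and hence hold, giving $F(\c{A})=\set{\mathrm{diag}(\alpha^{n},\beta^{n}):n\in\m{Z}}$. Since these four cases cover every $\beta>0$, the theorem follows.

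The substantive work has already been done in the earlier propositions—the polynomial-identity arguments behind \ref{pro:transcendal2} and \ref{pro:transcendal4}, together with the cube-root independence Lemma~\ref{lem:root2}—so for the theorem itself the points to verify are bookkeeping: that the passage $\alpha,\beta\mapsto\sqrt\alpha,\sqrt\beta$ preserves transcendence and all the exclusion constraints, that Proposition~\ref{pro:transcendal6} covers $\beta=1$ without adjustment, and that the four cases are exhaustive with each meeting the exact hypotheses of the proposition it invokes. I expect the only real obstacle to be confirming that this even-power-to-all-power passage is simultaneously compatible with every genericity constraint, rather than any new computation.
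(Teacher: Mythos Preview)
Your proposal is correct and follows essentially the same route as the paper: the paper's own proof is the two-line remark ``If $\beta\neq 1$, replace $\alpha$ by $\alpha^{1/2}$ in \ref{pro:transcendal2}, \ref{pro:transcendal4}, \ref{pro:transcendal5}; if $\beta=1$, use \ref{pro:transcendal6}.'' You have simply made the implicit case split $\beta\in\{1,\alpha,1/\alpha\}$ versus $\beta\notin\{1,\alpha,1/\alpha\}$ explicit and carefully verified that the square-root substitution preserves transcendence and the exclusion constraints $\sqrt\beta\neq 1,\sqrt\alpha,1/\sqrt\alpha$, which the paper leaves to the reader.
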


\begin{proof}
If $\beta\neq 1$, then replace $\alpha$ to $\alpha^{\frac{1}{2}}$ in \ref{pro:transcendal2}, \ref{pro:transcendal4}, and \ref{pro:transcendal5}.  
If $\beta=1$, by \ref{pro:transcendal6}.  
\end{proof}

\end{document}